\DeclareMathAlphabet{\mathpzc}{OT1}{pzc}{m}{it} 
\newtheorem{Thm}{Theorem}[section]
\newtheorem{Lem}{Lemma}[section]
\newtheorem{Prop}{Proposition}[section]
\newtheorem{Def}{Definition}[section]
\newtheorem{Pb}{Problem}[section]
\theoremstyle{definition}
\theoremstyle{definition}
\newcommand\function{\longrightarrow} 
\newcommand\en{\mathbb{N}} 
\newcommand\re{\mathbb{R}} 
\providecommand{\clint}[1]{\hspace{0.045ex}\left[#1\right]} 
\renewcommand\H{\mathcal{H}} 
\newcommand\duality[2]{\langle #1,#2 \rangle} 
\newcommand\norm[2]{\Vert #1\Vert_{#2}} 
\newcommand\vartot[1]{\!\left\bracevert\!\! #1 \!\!\right\bracevert\!} 
\newcommand{\Czero}{{\textsl{C}\hspace{0.18ex}}} 
\newcommand{\BV}{{\textsl{BV}\hspace{0.17ex}}} 
\DeclareMathOperator{\De}{D\!} 
\newcommand{\Pl}{{\mathsf{P}}} 
\newcommand{\Q}{{\mathsf{Q}}} 
\providecommand{\clsxint}[1]{\hspace{0.1ex}\left[#1\right[\hspace{0.15ex}} 
\providecommand{\cldxint}[1]{\hspace{0.15ex}\left]#1\right]} 
\providecommand{\opint}[1]{\hspace{0.15ex}\left]#1\right[\hspace{0.15ex}} 
\newcommand\X{\textsl{X}\hspace{0.21ex}} 
\renewcommand\d{\textsl{d}} 
\DeclareMathOperator{\Int}{int} 
\DeclareMathOperator{\Lipcost}{Lip} 
\newcommand{\Lip}{{\textsl{Lip}\hspace{0.15ex}}} 
\newcommand{\Reg}{{\textsl{Reg}\hspace{0.17ex}}} 
\DeclareMathOperator{\pV}{V} 
\DeclareMathOperator{\V}{V} 
\renewcommand\S{\mathcal{S}\hspace{0.21ex}} 
\newcommand\void{\varnothing} 
\newcommand\Z{\mathcal{Z}} 
\newcommand\K{\mathcal{K}} 
\DeclareMathOperator{\Proj}{Proj} 
\newcommand{\AC}{{\textsl{AC}\hspace{0.17ex}}} 
\newcommand\indicator{\mathds{1}} 
\newcommand{\eps}{\varepsilon} 
\newcommand{\borel}{\mathscr{B}} 
\renewcommand{\L}{{\textsl{L}\hspace{0.17ex}}} 
\newcommand\leb{\mathpzc{L}} 
\DeclareMathOperator{\de}{d \! \hspace{0.2ex}} 
\newcommand{\Step}{{\textsl{St}\hspace{0.17ex}}} 
\newcommand{\convergedeb}{\rightharpoonup} 
\newcommand{\W}{{\textsl{W}\hspace{0.17ex}}} 
\newcommand{\ftilde}{\widetilde{f}} 
\newcommand{\utilde}{\widetilde{u}} 
\newcommand{\vtilde}{\widetilde{v}} 
\newcommand{\Stop}{{\mathsf{S}}} 
\renewcommand\sp{\hspace{3.1ex}} 
\definecolor{blu}{rgb}{0.1,0.1,1}
\definecolor{green}{rgb}{0.0, 0.5, 0.0}
\definecolor{marr}{rgb}{0.63, 0.47, 0.35}
\begin{document}


\title[Non-convex play operator]{Continuity of the non-convex play operator  \\ in the space of rectifiable curves}

\author{Jana Kopfov\'a, Vincenzo Recupero}
\thanks{The second author is a member of GNAMPA-INdAM}

\address{\textbf{Jana Kopfov\'a} \\
Mathematical Institute of the Silesian University\\ 
Na Rybn\' i\v cku 1, CZ-74601 Opava\\ Czech Republic.
    \newline
        {\rm E-mail address:}
        {\tt Jana.Kopfova@math.slu.cz}}     
          
\address{\textbf{Vincenzo Recupero} \\
        Dipartimento di Scienze Matematiche \\ 
        Politecnico di Torino \\
        Corso Duca degli Abruzzi 24 \\ 
        I-10129 Torino \\ 
        Italy. \newline
        {\rm E-mail address:}
        {\tt vincenzo.recupero@polito.it}}

\subjclass[2010]{34G25, 34A60, 47J20, 49J52, 74C05}
\keywords{Evolution variational inequalities, Play operator, Sweeping processes, Functions of bounded variation, Prox-regular sets}



\begin{abstract}
In this paper we prove that the vector play operator with a uniformly prox-regular characteristic set of constraints is continuous with respect to the $\BV$-norm and to the $\BV$-strict metric in the space of continuous functions of bounded variation. We do not assume any further regularity of the characteristic set. We also prove that the non-convex play operator is rate independent.
\end{abstract}


\maketitle


\thispagestyle{empty}


\section{Introduction}

Several phenomena in elasto-plasticity, ferromagnetism, and phase transitions are modeled by the following evolution variational inequality in a real Hilbert space $\H$ with the inner product 
$\duality{\cdot}{\cdot}$:
\begin{eqnarray}\label{var in-intro}
 \duality{z - u(t) + y(t)}{y'(t)} \le 0  && \forall z \in \Z, \quad t \in \clint{0,T},\\
\label{constraint}
u(t) - y(t) \in \Z && \forall t \in \clint{0,T}.
\end{eqnarray}
Here $u : \clint{0,T} \function \H$ is a given ``input'' function, $T > 0$ being the final time of evolution, and $y  : \clint{0,T} \function \H$ is the unknown function, $y'$ being its derivative. It is  assumed that the set $\Z$  in the constraint \eqref{constraint}  is a closed convex subset of $ \H,$ and it is usually called \emph{the characteristic set}. We refer to the monographs \cite{KrPo,Ma,Vi,BrSp,Kre96,MieRou15} for surveys on these physical models.
It is well-known (see, e.g., \cite{Kre96}), that if $u$ is  absolutely continuous, then
there exists a unique absolutely continuous solution $y$ of \eqref{var in-intro}-\eqref{constraint} together with the given initial condition
\begin{equation}\label{initial cond-intro}
  u(0) - y(0) = z_{0} \in \Z.
\end{equation}
If we set $\Pl(u,z_0) := y$ we have defined a solution operator $\Pl : \W^{1,1}(\clint{0,T};\H) \times \Z \function \W^{1,1}(\clint{0,T};\H)$ which is called the \emph{play operator}. Here 
$\W^{1,1}(\clint{0,T};\H)$ denotes the space of $\H$-valued Lipschitz continuous functions defined on $\clint{0,T}$ (precise definitions  will be given in Sections \ref{S:Preliminaries} and 
\ref{S:state main result}). 
An important feature of $\Pl$ is its \emph{rate independence}, i.e. 
\begin{equation}\label{rate ind}
  \Pl(u \circ \phi) = \Pl(u) \circ \phi
\end{equation}
whenever $\phi : \clint{0,T} \function \clint{0,T}$ is an increasing surjective Lipschitz continuous  reparametrization of time. The play operator can be extended to continuous
functions of bounded variation, i.e. to inputs $u \in \Czero(\clint{0,T};\H) \cap \BV(\clint{0,T};\H)$ (\cite{Kre96}). This can be done by reformulating \eqref{var in-intro} as an integral variation inequality:
\begin{equation}\label{play BV-integral inequality}
  \int_{0}^{T} \duality{z(t) - u(t) + y(t)}{\de y(t)} \le 0, \qquad \forall z \in \BV(\clint{0,T};\Z),
\end{equation}
where the integral can be interpreted as a Riemann-Stieltjes integral (see, e.g., \cite{Kre96}),
but also as a Lebesgue integral with respect to the differential measure $\De y$, the distributional 
derivative of $y$ (see \cite{Rec11} for the equivalence of the two formulations).
By \cite{Kre96} for every $u \in \Czero(\clint{0,T};\H) \cap \BV(\clint{0,T};\H)$ there exists a unique $y \in \Czero(\clint{0,T};\H) \cap \BV(\clint{0,T};\H)$ such that 
\eqref{play BV-integral inequality}, \eqref{constraint}, \eqref{initial cond-intro} hold. Therefore the play operator can be extended to the operator
$\Pl : \Czero(\clint{0,T};\H) \cap \BV(\clint{0,T};\H) \times \Z \to \Czero(\clint{0,T};\H) \cap \BV(\clint{0,T};\H).$
Its domain of definition is naturally endowed with the strong $\BV$-norm defined by
\begin{equation}\label{def BVnorm}
  \norm{u}{\BV} := \norm{u}{\infty} + \V(u,\clint{0,T}), \qquad u \in \BV(\clint{0,T};\H),
\end{equation}
where $\norm{u}{\infty}$ is the supremum norm of $u$ and $\V(u,\clint{0,T})$ is the total variation of $u$.
For absolutely continuous inputs the $\BV$-norm is exactly the standard $\W^{1,1}$-norm, and the continuity of $\Pl$ on $\W^{1,1}(0,T;\H)$ in this special case was proved in \cite{Kre91} for finite dimensional 
$\H$ and in \cite{Kre96} for separable Hilbert spaces. For such spaces $\H$, assuming $\Z$ has a smooth boundary, the $\BV$-norm continuity of $\Pl$ on $\BV(\clint{0,T};\H) \cap \Czero(\clint{0,T};\H)$ (respectively on $\BV(\clint{0,T};\H)$) was proved in \cite{BroKreSch04} (respectively in \cite{KrRo}). Under this additional regularity of $\Z$, in \cite{BroKreSch04, KrRo} it is also shown that $\Pl$ is locally Lipschitz continuous.
In \cite{KopRec16} we were able to drop the regularity of $\Z$ and we proved that $\Pl$ is $\BV$-norm continuous on $\BV(\clint{0,T};\H)$ for an arbitrary characteristic set $\Z$.

Another relevant topology in $\BV$ is the one induced by the so-called \emph{strict metric}, which is defined by
\begin{equation}\label{def strictBV}
  d_{s}(u,v) := \norm{u - v}{\infty} + |\V(u,\clint{0,T}) - \V(v,\clint{0,T})|, \qquad
  u, v \in \BV(\clint{0,T};\H),
\end{equation}
indeed every $u \in \BV(\clint{0,T};\H)$ can be approximated by a sequence $u_n \in \AC(\clint{0,T};\H)$ converging to $u$ in the strict metric. In \cite{Kre96} it is proved that $\Pl$ is continuous on
$\Czero(\clint{0,T};\H) \cap \BV(\clint{0,T};\H)$ with respect to the strict metric (shortly, ``strictly continuous''), provided $\Z$ has a smooth boundary. In 
\cite{Rec11} this regularity assumption is dropped and it is proved that $\Pl$ is continuous on
$\Czero(\clint{0,T};\H) \cap \BV(\clint{0,T};\H)$ with respect to the stric metric for every characteristic convex set $\Z$. In \cite{Rec11} it is also proved that in general $\Pl$ is not strictly continuous on the whole $\BV(\clint{0,T};\H)$.
For other results on the continuity properties of $\Pl$ we refer to \cite{Re4, Rec15a, KleRec16}.

Previous results are concerned with the case of a convex set $\Z$, but the characteristic set of constraints can be non-convex in some applications, e.g. in problems of crowd motion modeling (see \cite{Venel}). 

In the following we will restrict ourselves to 
uniform prox-regular sets - these are closed sets having a neighborhood where the projection exists and is unique. For the notion of prox-regularity we refer the reader to \cite{Fed, vial, ClaSteWol95, PolRocThi00, ColThi10}. Following e.g. \cite{ColMon03, KreMonRec22a, KreMonRec22b}, we see that 
the proper formulation of \eqref{play BV-integral inequality} in the case of a prox-regular set $\Z$ reads
\begin{equation}\label{BVnonconv}
   \int_{0}^T\duality{z(t) - u(t) + y(t)}{\de y(t)} 
   \le 
  \frac{1}{2r} \int_0^T \norm{z(t)-u(t)+y(t)}{}^2 \de V_y(t)\qquad \forall z \in \BV(\clint{0,T};\Z),
\end{equation}
where $V_y(t) = \V(y,\clint{0,t})$ for $t \in \clint{0,T}$ and $\norm{\cdot}{}$ is the norm in $\H$.
It is well-known (cf., e.g., \cite{EdmThi06} or \cite{KreMonRec22a}) that for every 
$u \in \Czero(\clint{0,T};\H) \cap \BV(\clint{0,T};\H)$  there exists a unique 
$y = \Pl(u,z_0) \in \Czero(\clint{0,T};\H) \cap \BV(\clint{0,T};\H)$ which satifies \eqref{BVnonconv}, \eqref{constraint}, \eqref{initial cond-intro}. Thus also in the non-convex case the solution operator
\[
\Pl : \Czero(\clint{0,T};\H) \cap \BV(\clint{0,T};\H) \times \Z \to \Czero(\clint{0,T};\H) \cap \BV(\clint{0,T};\H),
\]
of problem \eqref{BVnonconv}, \eqref{constraint}, \eqref{initial cond-intro} can be defined, which  we will call
\emph{non-convex play operator}.
In \cite{KreMonRec22b} it is proved that in $\W^{1,1}(\clint{0,T};\H)$ the operator $\Pl$ is continuous (and also local Lipschitz continuous) with respect to the strong $\BV$-norm under the assumption that $\Z$ satisfies a suitable regularity assumption, to be more precise it is required that $\Z$ is the sublevel set of a Lipschitz continuous function. In the present paper we prove that $\Pl$ is $\BV$-norm continuous on the larger space
$\Czero(\clint{0,T};\H) \cap \BV(\clint{0,T};\H)$ and for every characteristic prox-regular set $\Z$. We also prove that it is continuous with respect to the strict metric on the space of continuous functions of bounded variation. The technique of our proof consists in reducing the problem to the space of Lipschitz continuous functions, where the problem
is considerably easier. In order to perform the reduction we use the rate independence of $\Pl$, which, to the best of our knowledge is proved here for the first time in the non-convex case.
The question of the $\BV$-norm continuity on the whole space $\BV(\clint{0,T};\H)$ will be addressed in a future paper: in that case the presence of jumps makes the problem considerably more difficult and  the reparametrization method studied in \cite{Rec16a,Rec20} is needed.

The plan of the paper is the following: In Section 2 we recall the preliminaries needed to prove our results, which are stated in Section 3. In Section 4 we perform all the proofs.


\section{Preliminaries}\label{S:Preliminaries}

The set of integers greater  or equal to $1$ will be denoted by $\en$. 

\subsection{Prox-regular sets}

Throughout this paper we assume that
\begin{equation}\label{H-prel}
\begin{cases}
  \text{$\H$ is a real Hilbert space with the inner product 
  $\duality{x}{y}$}, \\
  \H \neq \{0\}, \\
  \norm{x}{} := \duality{x}{x}^{1/2} \qquad \text{for $x \in \H$}.
\end{cases}
\end{equation}
If $\S \subseteq \H$ and $x \in \H$ we set $\d_\S(x) :=  \inf\{\norm{x-s}{}\ :\ s \in \S\}$.

\begin{Def}
If $\K$ is a closed subset of $\H$, $\K \neq \void$, and $y \in \H$, we define the 
\emph{set of  projections of $y$ onto $\K$} by setting
\begin{equation}
 \Proj_\K(y) := \left\{x \in \K\ :\ \norm{x-y}{} = \inf_{z \in \K} \norm{z-y}{}\right\}
\end{equation}
and the \emph{(exterior) normal cone of $\K$ at $x$} by
\begin{equation}\label{normal cone}
  N_\K(x) := \{\lambda(y-x) \ :\ x \in \Proj_\K(y),\ y \in \H,\ \lambda \ge 0\}.
\end{equation}
\end{Def}

We recall the notion of prox-regularity (see \cite[Theorem 4.1-(d)]{ClaSteWol95}) which can also be called ``mild non-convexity''.

\begin{Def}
If $\K$ is a closed subset of $\H$ and if $r \in \opint{0,\infty}$, we say that $\K$ is 
\emph{$r$-prox-regular} if for every $y \in \{v \in \H\ :\ 0 < \d_{\K}(v) < r\}$ we have that 
$\Proj_\K(y) \neq \void$ and
\[
  x \in \Proj_\K\left(x+r\frac{y-x}{\norm{y-x}{}}\right), \qquad \forall x \in \Proj_\K(y).
\]
\end{Def}

It is well-known and easy to prove that if $x \in \Proj_\K(y_0)$ for some $y_0 \in \H$, then
$\Proj_\K(y) = \{x\}$ for every $y$ lying in the segment with endpoints $y_0$ and $x$. Thus it follows that if $\K$ is $r$-prox-regular for some $r > 0$, then $\Proj_\K(y)$ is a singleton for every 
$y \in \{v \in \H\ :\ 0 < \d_{\K}(v) < r\}$.

Prox-regularity can be characterized by means of a variational inequality, indeed in
\cite[Theorem 4.1]{PolRocThi00} and in \cite[Theorem 16]{ColThi10} one can find the proof of the following:

\begin{Thm}\label{charact proxreg}
Let $\K$ be a closed subset of $\H$ and let $r \in \opint{0,\infty}$. Then $\K$ is $r$-prox-regular if and only if for every $x \in \K$ and $n \in N_\K(x)$ we have
\[
  \duality{n}{z-x} \le \frac{\norm{n}{}}{2r}\norm{z-x}{}^2, \qquad \forall z \in \K.
\]
\end{Thm}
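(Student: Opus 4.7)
The plan is to prove the two implications separately, exploiting that the normal cone is here defined via projections, which directly couples the hypothesis to the prox-regular property.

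For the forward direction, I would take $x \in \K$ and $n \in N_\K(x)$, disposing of the trivial case $n = 0$ and otherwise writing $n = \lambda(y-x)$ for some $\lambda > 0$ and $y \in \H$ with $x \in \Proj_\K(y)$, so that $n/\norm{n}{} = (y-x)/\norm{y-x}{}$. The $r$-prox-regularity property applied to $y$ yields $x \in \Proj_\K(x + rn/\norm{n}{})$, i.e.\ $\norm{z - x - rn/\norm{n}{}}{}^2 \ge r^2$ for every $z \in \K$. Expanding the square and cancelling the $r^2$ term then produces the claimed estimate
\[
  \duality{n}{z-x} \le \frac{\norm{n}{}}{2r}\norm{z-x}{}^2.
\]

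For the converse, I would assume the variational inequality and fix $y \in \H$ with $0 < \d_\K(y) < r$, first dispatching the easy pieces assuming a projection exists. For uniqueness, if $x_1, x_2 \in \Proj_\K(y)$ then $y - x_i \in N_\K(x_i)$, so applying the hypothesis at $x_1$ with test point $x_2$ and vice versa and summing produces
\[
  \norm{x_1 - x_2}{}^2 \le \frac{\d_\K(y)}{r}\norm{x_1 - x_2}{}^2,
\]
forcing $x_1 = x_2$ since $\d_\K(y) < r$. For the radial property, given $x \in \Proj_\K(y)$ and $\tilde y := x + r(y-x)/\norm{y-x}{}$, applying the hypothesis to $n = y - x \in N_\K(x)$, multiplying by $r/\norm{y-x}{}$, and expanding $\norm{z - \tilde y}{}^2 = \norm{z-x}{}^2 - 2\duality{z-x}{\tilde y - x} + r^2$ gives $\norm{z - \tilde y}{} \ge r = \norm{x - \tilde y}{}$, so that $x \in \Proj_\K(\tilde y)$.

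The main obstacle will be the existence of a projection under the hypothesis alone. For a minimizing sequence $(x_n) \subset \K$ with $\norm{x_n - y}{} \to \d_\K(y)$, the vectors $y - x_n$ are not normals to $\K$ at $x_n$, so the hypothesized inequality cannot be invoked verbatim. I would handle this via Ekeland's variational principle applied to $z \mapsto \norm{z - y}{}^2$ on the complete set $\K$, producing $\epsilon$-minimizers together with an associated approximate normal that satisfies a perturbed version of the hypothesized estimate; letting $\epsilon \to 0$ and using the quadratic control on the right-hand side shows the minimizing sequence is Cauchy, yielding a genuine element of $\Proj_\K(y)$. This bootstrapping step is precisely what the detailed arguments in \cite{PolRocThi00, ColThi10} carry out.
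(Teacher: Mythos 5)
Your proposal addresses a statement the paper does not prove at all: Theorem \ref{charact proxreg} is quoted from \cite[Theorem 4.1]{PolRocThi00} and \cite[Theorem 16]{ColThi10}, so the only comparison available is with those external proofs. The parts you do carry out are essentially correct: the expansion giving the forward implication, the uniqueness argument, and the verification of the radial property are all sound (uniqueness is in fact not part of the paper's definition of $r$-prox-regularity, so that piece is harmless but dispensable). One small repair is needed in the forward direction: you cannot apply the prox-regularity property ``to $y$'' directly, because the definition only covers points with $0 < \d_{\K}(y) < r$, and $\norm{y-x}{}$ may well exceed $r$. You must first replace $y$ by $y_t := x + t(y-x)/\norm{y-x}{}$ with $0 < t < r$, using the segment fact stated in the paper right after the definition (namely that $x$ remains the projection of every point of the segment joining $x$ and $y$); since $(y_t-x)/\norm{y_t-x}{} = n/\norm{n}{}$, the rest of your computation goes through unchanged.

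The genuine gap is the existence of a projection for $0 < \d_{\K}(y) < r$, and your Ekeland sketch does not close it. The hypothesized inequality is available only at exact proximal normals, i.e.\ vectors $n = \lambda(w-x)$ with $x \in \Proj_\K(w)$ --- precisely the objects whose existence is in question. Applying Ekeland's principle to $z \mapsto \norm{z-y}{}^2$ on $\K$ produces a point $x_\eps$ minimizing $z \mapsto \norm{z-y}{}^2 + \eps\norm{z-x_\eps}{}$ over $\K$, but this does not make $y - x_\eps$ (nor any vector read off from this condition) a proximal normal at $x_\eps$; in a general closed set the proximal normal cone may even be trivial at such points, so the ``perturbed version of the hypothesized estimate'' you invoke is not available without a substantial further argument (density of points admitting nearest points, density of proximal normals near approximate minimizers, or the hypomonotonicity machinery of \cite{PolRocThi00,ColThi10}). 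That bootstrapping is the entire analytic content of the ``if'' direction in infinite dimensions --- in finite dimensions existence is immediate from closedness and local compactness, but the paper's $\H$ is a general Hilbert space --- and your write-up delegates it wholesale to the references. Since the paper itself merely cites those references, your proposal effectively reproduces the paper's treatment for the easy implications while leaving the one genuinely hard step unproved; as a self-contained argument it is incomplete exactly there.
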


\subsection{Functions of bounded variation}

Let $I$ be an interval of $\re$. The set of $\H$-valued continuous functions defined on $I$ is denoted by $\Czero(I;\H)$. For a function $f : I \function \H$ and for $S \subseteq I$ we write 
$\Lipcost(f,S) := \sup\{\d(f(s),f(t))/|t-s|\ :\ s, t \in S,\ s \neq t\}$, $\Lipcost(f) := \Lipcost(f,I)$, the Lipschitz constant of $f$, and $\Lip(I;\X) := \{f : I \function \H\ :\ \Lipcost(f) < \infty\}$, the set of 
$\H$-valued Lipschitz continuous functions on $I$. 

\begin{Def}
Given an interval $I \subseteq \re$, a function $f : I \function \H$, and a subinterval $J \subseteq I$, the \emph{variation of $f$ on $J$} is defined by
\begin{equation}\notag
  \pV(f,J) := 
  \sup\left\{
           \sum_{j=1}^{m} \d(f(t_{j-1}),f(t_{j}))\ :\ m \in \en,\ t_{j} \in J\ \forall j,\ t_{0} < \cdots < t_{m} 
         \right\}.
\end{equation}
If $\pV(f,I) < \infty$ we say that \emph{$f$ is of bounded variation on $I$} and we set 
\[
  \BV(I;\H) := \{f \in I : \function \H\ :\ \pV(f,I) < \infty\}.
\]
\end{Def}

It is well known that the completeness of $\H$ implies that every $f \in \BV(I;\H)$ admits one sided limits $f(t-), f(t+)$ at every point $t \in I$, with the convention that $f(\inf I-) := f(\inf I)$ if $\inf I \in I$, and that $f(\sup I+) := f(\sup I)$ if $\sup I \in I$. If $I$ is bounded we have 
$\Lip(I;\H) \subseteq \BV(I;\H)$.

\subsection{Differential measures}\label{differential measures}

Given an interval $I$ of the real line $\mathbb{R}$, the family of Borel sets in $I$ is denoted by 
$\borel(I)$. If $\mu : \borel(I) \function \clint{0,\infty}$ is a measure, $p \in \clint{1,\infty}$, then the space of $\H$-valued functions which are $p$-integrable with respect to $\mu$ will be denoted by 
$\L^p(I, \mu; \H)$ or simply by $\L^p(\mu; \H)$. For the theory of integration of vector valued functions we refer, e.g., to \cite[Chapter VI]{Lan93}. 
When $\mu = \leb^1$, where $ \leb^1$ is  the one dimensional Lebesgue measure, 
we write  $\L^p(I; \H) := \L^p(I,\mu; \H)$.

We recall that a \emph{$\H$-valued measure on $I$} is a map $\nu : \borel(I) \function \H$ such that 
$\nu(\bigcup_{n=1}^{\infty} B_{n})$ $=$ $\sum_{n = 1}^{\infty} \nu(B_{n})$ for every sequence 
$(B_{n})$ of mutually disjoint sets in $\borel(I)$. The \emph{total variation of $\nu$} is the positive measure $\vartot{\nu} : \borel(I) \function \clint{0,\infty}$ defined by
\begin{align}\label{tot var measure}
  \vartot{\nu}(B)
  := \sup\left\{\sum_{n = 1}^{\infty} \norm{\nu(B_{n})}{}\ :\ 
                 B = \bigcup_{n=1}^{\infty} B_{n},\ B_{n} \in \borel(I),\ 
                 B_{h} \cap B_{k} = \varnothing \text{ if } h \neq k\right\}. \notag
\end{align}
The vector measure $\nu$ is said to be \emph{with bounded variation} if $\vartot{\nu}(I) < \infty$. In this case the equality $\norm{\nu}{} := \vartot{\nu}(I)$ defines a complete norm on the space of measures with bounded variation (see, e.g. \cite[Chapter I, Section  3]{Din67}). 

If $\mu : \borel(I) \function \clint{0,\infty}$ is a positive bounded Borel measure and if 
$g \in \L^1(I,\mu;\H)$, then $g\mu : \borel(I) \function \H$ denotes the vector measure defined by 
\begin{equation}
  g\mu(B) := \int_B g\de \mu, \qquad B \in \borel(I). \notag
\end{equation} 
In this case we have that 
\[
  \vartot{g\mu}(B) = \int_B \norm{g(t)}{}\de \mu \qquad \forall B \in \borel(I)
\] 
(see \cite[Proposition 10, p. 174]{Din67}). 

Assume that $\nu : \borel(I) \function \H$ is a vector measure with bounded variation and $f : I \function \H$ and $\phi : I \function \mathbb{R}$ are two \emph{step maps with respect to $\nu$}, i.e. there exist $f_{1}, \ldots, f_{m} \in \H$, 
$\phi_{1}, \ldots, \phi_{m} \in \H$ and $A_{1}, \ldots, A_{m} \in \borel(I)$ mutually disjoint such that 
$\vartot{\nu}(A_{j}) < \infty$ for every $j$ and $f = \sum_{j=1}^{m} \indicator_{A_{j}} f_{j}$, 
$\phi = \sum_{j=1}^{m} \indicator_{A_{j}} \phi_{j},$ where $\indicator_{S} $ is the characteristic function of a set $S$, i.e. 
$\indicator_{S}(x) := 1$ if $x \in S$ and $\indicator_{S}(x) := 0$ if $x \not\in S$. For such step maps we define 
$\int_{I} \duality{f}{\de\nu} := \sum_{j=1}^{m} \duality{f_{j}}{\nu(A_{j})} \in \mathbb{R}$ and
$\int_{I} \phi \de \nu := \sum_{j=1}^{m} \phi_{j} \nu(A_{j}) \in \H$.

 If $\Step(\vartot{\nu};\H)$ (resp. $\Step(\vartot{\nu})$) is the set of $\H$-valued (resp. real valued) step maps with respect to $\nu$, then the maps
$\Step(\vartot{\nu};\H)$ $\function$ $\H : f \longmapsto \int_{I} \duality{f}{\de\nu}$ and
$\Step(\vartot{\nu})$ $\function$ $\H : \phi \longmapsto \int_{I} \phi \de \nu$ 
are linear and continuous when $\Step(\vartot{\nu};\H)$ and $\Step(\vartot{\nu})$ are endowed with the 
$\L^{1}$-seminorms $\norm{f}{\L^{1}(\vartot{\nu};\H)} := \int_I \norm{f}{} \de \vartot{\nu}$ and
$\norm{\phi}{\L^{1}(\vartot{\nu})} := \int_I |\phi| \de \vartot{\nu}$. Therefore they admit unique continuous extensions 
$\mathsf{I}_{\nu} : \L^{1}(\vartot{\nu};\H) \function \mathbb{R}$ and 
$\mathsf{J}_{\nu} : \L^{1}(\vartot{\nu}) \function \H$,
and we set 
\[
  \int_{I} \duality{f}{\de \nu} := \mathsf{I}_{\nu}(f), \quad
  \int_{I} \phi\, \de\nu := \mathsf{J}_{\nu}(\phi),
  \qquad f \in \L^{1}(\vartot{\nu};\H),\quad \phi \in \L^{1}(\vartot{\nu}).
\]

If $\mu$ is a bounded positive measure and $g \in \L^{1}(\mu;\H)$, arguing first on step functions, and then taking limits, it is easy to check that 
\[
  \int_I\duality{f}{\de(g\mu)} = \int_I \duality{f}{g}\de \mu, \qquad \forall f \in \L^{\infty}(\mu;\H).
\] 
The following results (cf., e.g., \cite[Section III.17.2-3, p. 358-362]{Din67}) provide a connection between functions with bounded variation and vector measures which will be implicitly used in this paper.

\begin{Thm}\label{existence of Stietjes measure}
For every $f \in \BV(I;\H)$ there exists a unique vector measure of bounded variation $\nu_{f} : \borel(I) \function \H$ such that 
\begin{align}
  \nu_{f}(\opint{c,d}) = f(d-) - f(c+), \qquad \nu_{f}(\clint{c,d}) = f(d+) - f(c-), \notag \\ 
  \nu_{f}(\clsxint{c,d}) = f(d-) - f(c-), \qquad \nu_{f}(\cldxint{c,d}) = f(d+) - f(c+). \notag 
\end{align}
whenever $\inf I \le c < d \le \sup I$ and the left hand side of each equality makes sense.

\noindent Conversely, if $\nu : \borel(I) \function \H$ is a vector measure with bounded variation, and if $f_{\nu} : I \function \H$ is defined by 
$f_{\nu}(t) := \nu(\clsxint{\inf I,t} \cap I)$, then $f_{\nu} \in \BV(I;\H)$ and $\nu_{f_{\nu}} = \nu$.
\end{Thm}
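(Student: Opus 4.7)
My plan is to reduce the vector-valued statement to the classical scalar Lebesgue--Stieltjes theorem by testing against elements of $\H$, after which the converse will be essentially automatic.

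\textbf{Uniqueness.} The half-open intervals $\clsxint{c,d} \cap I$ form a $\pi$-system generating $\borel(I)$. If two vector measures of bounded variation coincide on such intervals, then for every $v \in \H$ the scalar signed measures $B \mapsto \duality{v}{\nu(B)}$ agree on this $\pi$-system, hence on all of $\borel(I)$ by the monotone-class / Dynkin lemma. Since this holds for all $v$, the two vector measures coincide. Thus the interval values prescribed in the statement already determine $\nu_f$ completely.

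\textbf{Existence.} For each $v \in \H$ the scalar function $f_v(t) := \duality{v}{f(t)}$ lies in $\BV(I;\re)$ with $\pV(f_v, I) \le \norm{v}{} \pV(f, I)$. The classical scalar Lebesgue--Stieltjes construction produces a unique signed Borel measure $\mu_v$ with $\mu_v(\clsxint{c,d}) = f_v(d-) - f_v(c-)$ and $\vartot{\mu_v}(I) \le \norm{v}{} \pV(f, I)$; the map $v \mapsto \mu_v$ is linear. Fixing $B \in \borel(I)$, the functional $v \mapsto \mu_v(B)$ is linear and bounded by $\pV(f,I) \norm{v}{}$, so the Riesz representation theorem produces a unique $\nu_f(B) \in \H$ with $\duality{v}{\nu_f(B)} = \mu_v(B)$ for every $v \in \H$. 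Countable additivity of $\nu_f$ passes from the scalar (pointwise in $v$) case to the norm version by the Orlicz--Pettis theorem, or, more elementarily, by dominating $\sum_n \norm{\nu_f(B_n)}{}$ using, for each $n$, a choice of $v_n$ with $\norm{v_n}{} \le 1$ that nearly realises $\norm{\nu_f(B_n)}{}$ and invoking the uniform bound $\vartot{\mu_{v_n}}(I) \le \pV(f,I)$. The four interval identities are immediate from their scalar counterparts, and taking the supremum over $\norm{v}{} \le 1$ in $\vartot{\mu_v}(I) \le \norm{v}{}\pV(f,I)$ gives $\vartot{\nu_f}(I) \le \pV(f,I) < \infty$.

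\textbf{Converse.} Given a vector measure $\nu$ of bounded variation, set $f_\nu(t) := \nu(\clsxint{\inf I, t} \cap I)$. For any subdivision $t_0 < \cdots < t_m$ in $I$ the sets $\clsxint{t_{j-1}, t_j}$ are pairwise disjoint, so
\[
  \sum_{j=1}^m \norm{f_\nu(t_j) - f_\nu(t_{j-1})}{} = \sum_{j=1}^m \norm{\nu(\clsxint{t_{j-1}, t_j})}{} \le \vartot{\nu}(I),
\]
whence $f_\nu \in \BV(I;\H)$. The measure $\nu_{f_\nu}$ produced by the existence step agrees with $\nu$ on every half-open interval $\clsxint{c,d} \cap I$ by a direct computation of $f_\nu(d-) - f_\nu(c-)$, and therefore on all of $\borel(I)$ by the uniqueness step.

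\textbf{Main obstacle.} The only non-routine point is the transition from weak to norm countable additivity of $B \mapsto \nu_f(B)$; the cleanest tool is the Orlicz--Pettis theorem, though the uniform variation estimate above makes it possible to avoid this by a direct $\varepsilon 2^{-n}$ selection argument. Every remaining step is a straightforward translation of the classical scalar Lebesgue--Stieltjes construction through the duality $v \mapsto \mu_v$.
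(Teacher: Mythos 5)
The paper itself does not prove this theorem; it cites it from Dinculeanu \cite[Section III.17.2-3]{Din67}, so I assess your scalarization argument on its own merits. Your overall route (test against $v\in\H$, use the scalar Lebesgue--Stieltjes theorem, recover $\nu_f(B)$ by Riesz representation) is viable, but two steps have genuine gaps. First, in the existence part, bounded variation of $\nu_f$ does \emph{not} follow by ``taking the supremum over $\norm{v}{}\le 1$ in $\vartot{\mu_v}(I)\le\norm{v}{}\pV(f,I)$'': that supremum is the \emph{semivariation} of $\nu_f$, which for infinite-dimensional $\H$ can be strictly smaller than the variation (there are vector measures with finite semivariation and infinite variation, e.g. $B\mapsto\indicator_B\in\L^2(0,1)$). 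The same defect affects your ``elementary'' alternative to Orlicz--Pettis: for disjoint $B_n$ and unit vectors $v_n$ you need $\sum_n\vartot{\mu_{v_n}}(B_n)\le\pV(f,I)$, which does not follow from the bound $\vartot{\mu_{v_n}}(I)\le\pV(f,I)$ for each $n$ separately. Both gaps are repaired by one lemma, which is the real heart of the construction: with $V_f(t):=\pV(f,\clint{\inf I,t}\cap I)$ and $\mu_{V_f}$ its scalar Stieltjes measure, one has the \emph{setwise} domination $\vartot{\mu_v}(B)\le\norm{v}{}\,\mu_{V_f}(B)$ for all $B\in\borel(I)$ (check it on intervals via $|f_v(d-)-f_v(c-)|\le\norm{v}{}\,(V_f(d-)-V_f(c-))$ and extend by a monotone class argument). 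This gives norm countable additivity directly ($\norm{\nu_f(R_N)}{}\le\mu_{V_f}(R_N)\to0$ for $R_N\downarrow\void$, no Orlicz--Pettis needed) and $\vartot{\nu_f}\le\mu_{V_f}$, hence $\vartot{\nu_f}(I)\le\pV(f,I)$; note that Orlicz--Pettis alone yields countable additivity but never bounded variation.

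Second, your uniqueness step is stated for the $\pi$-system of half-open intervals only, and in that form it is false: if $\sup I\in I$, the Dirac measure $\delta_{\sup I}$ and the zero measure agree on every $\clsxint{c,d}\cap I$ yet differ on $\{\sup I\}$; Dynkin's lemma also needs agreement on the whole space (or on an exhausting sequence taken from the $\pi$-system). For the uniqueness assertion of the theorem this is easily fixed, since the prescribed formulas also fix the values on closed intervals $\clint{c,\sup I}$, so the total masses coincide. But in the converse the gap sits exactly at the delicate point: your direct computation only gives agreement of $\nu$ and $\nu_{f_\nu}$ on the sets $\clsxint{c,d}\cap I$, and these never determine the mass at $\sup I$. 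To conclude $\nu_{f_\nu}=\nu$ you must also compare, say, $\nu_{f_\nu}(\cldxint{c,\sup I})=f_\nu(\sup I+)-f_\nu(c+)$ with $\nu(\cldxint{c,\sup I})$, and with the paper's conventions $f_\nu(\sup I+)=f_\nu(\sup I)=\nu(\clsxint{\inf I,\sup I}\cap I)$ does not see a possible atom of $\nu$ at $\sup I$; this endpoint case has to be discussed explicitly (it is where the formulation of the converse is sensitive), rather than delegated to the as-stated invalid $\pi$-system argument.
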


\begin{Prop}
Let $f  \in \BV(I;\H)$, let $g : I \function \H$ be defined by $g(t) := f(t-)$, for $t \in \Int(I)$, and by $g(t) := f(t)$, if $t \in \partial I$, and let $V_{g} : I \function \mathbb{R}$ be defined by 
$V_{g}(t) := \pV(g, \clint{\inf I,t} \cap I)$. Then  
$\nu_{g} = \nu_{f}$ and $\vartot{\nu_{f}} = \nu_{V_{g}} = \pV(g,I)$.
\end{Prop}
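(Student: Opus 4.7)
The strategy has two parts: first prove $\nu_g = \nu_f$, then bootstrap to $\vartot{\nu_f} = \nu_{V_g}$, both by verifying agreement on intervals and invoking the uniqueness part of Theorem \ref{existence of Stietjes measure}. The first step is to check that the one-sided limits of $g$ coincide with those of $f$ at every $t \in I$. The identity $g(t-) = f(t-)$ is essentially tautological since $g$ is constructed to be left-continuous on $\Int(I)$ and to agree with $f$ at $\partial I$ by the stated conventions. The identity $g(t+) = f(t+)$ requires a short $\varepsilon$-argument: given $s_n \to t^+$ in $I$, for $n$ large every $r \in \opint{t, s_n}$ satisfies $\norm{f(r) - f(t+)}{} < \varepsilon/2$, and letting $r \to s_n^-$ yields $\norm{f(s_n-) - f(t+)}{} \le \varepsilon/2$. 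With these identities, for every $c < d$ in $I$ one finds $\nu_g(\opint{c,d}) = g(d-) - g(c+) = f(d-) - f(c+) = \nu_f(\opint{c,d})$ and the analogous equalities for $\clint{c,d}$, $\clsxint{c,d}$, $\cldxint{c,d}$. Since intervals form a $\pi$-system generating $\borel(I)$ and both measures are $\sigma$-additive, the uniqueness part of Theorem \ref{existence of Stietjes measure} forces $\nu_g = \nu_f$ on $\borel(I)$.

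For the second assertion, since $\nu_f = \nu_g$ it suffices to prove $\vartot{\nu_g} = \nu_{V_g}$. The function $V_g$ is nondecreasing and inherits left-continuity from $g$ via the additivity relation $V_g(t) - V_g(s) = \pV(g, \clint{s,t} \cap I)$, so for $c < d$ in $I$ one computes
\begin{equation*}
\nu_{V_g}(\clsxint{c,d}) = V_g(d-) - V_g(c-) = \pV(g, \clsxint{c,d} \cap I).
\end{equation*}
On the other hand, $\vartot{\nu_g}(\clsxint{c,d}) = \pV(g, \clsxint{c,d} \cap I)$ as well: every finite partition $c = t_0 < \cdots < t_n < d$ produces via $\nu_g$ the telescoping sum $\sum_{i} \norm{g(t_i-) - g(t_{i-1}-)}{}$, giving the lower bound $\pV(g, \clsxint{c,d})$, and the reverse inequality is obtained by refining an arbitrary countable Borel partition by sub-intervals. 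Agreement on the $\pi$-system of half-open intervals extends to all of $\borel(I)$ by the standard uniqueness of positive measures, giving $\vartot{\nu_f} = \nu_{V_g}$; specializing to $B = I$ yields the final identity $\vartot{\nu_f}(I) = \pV(g,I)$.

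The main obstacle is the identification $\vartot{\nu_g}(\clsxint{c,d}) = \pV(g, \clsxint{c,d} \cap I)$: reconciling the definition of $\vartot{\nu_g}$ via arbitrary countable Borel partitions with the ordered finite-partition definition of $\pV$ requires the refinement-to-intervals trick, and crucially exploits the left-continuity of $g$ to ensure that no extraneous jump contributions arise at the endpoints of the partitioning intervals. This is essentially the classical Dinculeanu correspondence between $\BV$ functions and vector measures of bounded variation, applied here to the canonical left-continuous representative $g$ so that the variation function $V_g$ correctly encodes the Stieltjes structure. Once this correspondence is set up, everything else is a matter of routine uniqueness arguments on a generating family of intervals.
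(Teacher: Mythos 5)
Your first half is essentially fine, and it is worth noting that the paper itself does not prove this proposition (it is quoted from Dinculeanu, Section III.17.2--3), so your argument has to stand on its own. The identification $g(t\pm)=f(t\pm)$ and the appeal to the uniqueness part of the theorem on the Stieltjes measure do give $\nu_g=\nu_f$; the only thing you should add there is the one-line observation that $g\in\BV(I;\H)$ in the first place (partition sums for $g$ are limits of partition sums for $f$, so $\pV(g,J)\le\pV(f,J)$), since otherwise $\nu_g$ is not even defined.

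The second half has two genuine gaps. First, the inequality $\vartot{\nu_g}(\clsxint{c,d})\le\pV(g,\clsxint{c,d})$ is not proved: ``refining an arbitrary countable Borel partition by sub-intervals'' is not an available operation, because the members of a Borel partition need not be unions of intervals in any sense. The standard repair is to first establish the pointwise domination $\norm{\nu_g(B)}{}\le\nu_{V_g}(B)$ for \emph{every} Borel set $B$ (approximate $B$ from outside by a relatively open set, write it as a countable disjoint union of intervals, and use $\norm{g(s)-g(r)}{}\le V_g(s)-V_g(r)$ together with one-sided limits), and only then sum over the partition to get $\vartot{\nu_g}\le\nu_{V_g}$; your finite ordered partitions give the reverse inequality on intervals. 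Second, the uniqueness step is incomplete at the right endpoint: when $\sup I\in I$, no set $\clsxint{c,d}$ with $c,d\in I$ contains $\sup I$, and $I$ is not a countable union of such sets, so two finite measures may agree on all of them and still differ by an atom at $\{\sup I\}$ (e.g.\ $0$ and $\delta_{\sup I}$); the $\pi$-system argument alone does not close this. You must separately check $\vartot{\nu_g}(\{\sup I\})=\nu_{V_g}(\{\sup I\})$, i.e.\ $\norm{g(\sup I)-g(\sup I-)}{}=V_g(\sup I)-V_g(\sup I-)$, and this is exactly where your blanket claim that ``$V_g$ inherits left-continuity from $g$'' fails: $g(\sup I)=f(\sup I)$, not $f(\sup I-)$, so both $g$ and $V_g$ may jump there. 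The jump identity $V_g(t)-V_g(t-)=\norm{g(t)-g(t-)}{}$ (which also underlies the left-continuity of $V_g$ at interior points that you use to get $V_g(c-)=V_g(c)$) requires its own short argument and is nowhere supplied. With the domination step and the endpoint check added, the proof follows the classical Dinculeanu correspondence correctly.
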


The measure $\nu_{f}$ is called the \emph{Lebesgue-Stieltjes measure} or \emph{differential measure} of $f$. Let us see the connection of the differential measure with the distributional derivative. If $f \in \BV(I;\H)$ and if $\overline{f}: \mathbb{R} \function \H$ is defined by
\begin{equation}\label{extension to R}
  \overline{f}(t) :=
  \begin{cases}
    f(t) 	& \text{if $t \in I$}, \\
    f(\inf I)	& \text{if $\inf I \in \mathbb{R}$, $t \not\in I$, $t \le \inf I$},\\
    f(\sup I)	& \text{if $\sup I \in \mathbb{R}$, $t \not\in I$, $t \ge \sup I,$}
  \end{cases}
\end{equation}
then, as in the scalar case, it turns out (cf. \cite[Section 2]{Rec11}) that $\nu_{f}(B) = \De \overline{f}(B)$ for every 
$B \in \borel(\mathbb{R})$, where $\De\overline{f}$ is the distributional derivative of $\overline{f}$, i.e.
\[
  - \int_\mathbb{R} \varphi'(t) \overline{f}(t) \de t = \int_{\mathbb{R}} \varphi \de \De \overline{f}, 
  \qquad \forall \varphi \in \Czero_{c}^{1}(\mathbb{R};\mathbb{R}),
\]
where $\Czero_{c}^{1}(\mathbb{R};\mathbb{R})$ is the space of continuously differentiable functions on $\mathbb{R}$ with compact support.
Observe that $\De \overline{f}$ is concentrated on $I$: $\De \overline{f}(B) = \nu_f(B \cap I)$ for every $B \in \borel(I)$, hence in the remainder of the paper, if $f \in \BV(I,\H)$ then we will simply write
\begin{equation}
  \De f := \De\overline{f} = \mu_f, \qquad f \in \BV(I;\H),
\end{equation}
and from the previous discussion it follows that 
\begin{equation}\label{D-TV-pV}
  \norm{\De f}{} = \vartot{\De f}(I) = \norm{\nu_f}{}  = \pV(f,I), \qquad \forall f \in \BV(I;\H).
\end{equation}
If $I$ is bounded and $p \in \clint{1,\infty}$, then the classical Sobolev space $\W^{1,p}(I;\H)$ consists of those functions $f \in \Czero(I;\H)$ for which $\De f = g\leb^1$ for some $g \in \L^p(I;\H)$ and  $\W^{1,\infty}(I;\H) = \Lip(I;\H)$. Let us also recall that if $f \in \W^{1,1}(I;\H)$ then the derivative $f'(t)$ exists $\leb^1$-a.e. in $t \in I$, 
$\De f = f' \leb^1$, and $\V(f,I) = \int_I\norm{f'(t)}{}\de t$ (cf., e.g. \cite[Appendix]{Bre73}).


\section{Main results}\label{S:state main result}

From now on we will assume that
\begin{equation}\label{Z}
  \text{$\Z$ is a $r$-prox-regular subset of $\H$ for some $r > 0$}, 
\end{equation}
and
\begin{equation}\label{T}
 T > 0.
\end{equation}
We will consider on $\BV(\clint{0,T};\H)$ the classical complete $\BV$-norm defined by
\eqref{def BVnorm},
where 
\[
  \norm{f}{\infty} := \sup\{\norm{f(t)}{}\ :\ t \in \clint{0,T}\}.
\]
The norm \eqref{def BVnorm} is equivalent to the norm defined by 
\[
  \interleave f \interleave_{\BV} := \norm{f(0)}{} + \V(f,\clint{0,T}), \qquad f \in \BV(\clint{0,T};\H).
\]
From \eqref{D-TV-pV} it also follows that
\[
\norm{f}{\BV} = \norm{f}{\infty} + \norm{\De f}{} = \norm{f}{\infty} + \vartot{\De f}(\clint{0,T}),\qquad 
\forall f \in \BV(\clint{0,T};\H).
\]
where $\De f$ is the differential measure of $f$ and $\vartot{\De f}$ is the total variation measure of $\De f$. 

\noindent We also have
\[
   \norm{f}{\BV} = \norm{f}{\infty} + \int_0^T\norm{f'(t)}{}\de t \qquad \forall f \in \W^{1,1}(\clint{0,T};\H).
\]
On $\BV(\clint{0,T};\H)$ we will consider also the so-called \emph{strict metric} defined by \eqref{def strictBV}.
We say that \emph{$f_n \to f$ strictly on $\clint{0,T}$} if $\d_{s}(f_n,f) \to 0$ as $n \to \infty$. Let us recall that $\d_{s}$ is not complete and the topology induced by $\d_{s}$ is not linear.

We now state the main problem of our paper.
The solution operator of this problem is classically called ``play operator'' in the case when the characteristic set $\Z$ of constraint is convex. We will call it ``play operator'' also in the non-convex case, or we will use the term ``non-convex play operator'' in order to emphasize the non-convexity of the set of constraints.

\begin{Pb}\label{CBVplay}
Assume that \eqref{H-prel}, \eqref{Z}, \eqref{T} hold. For any 
$u \in \Czero(\clint{0,T};\H) \cap \BV(\clint{0,T};\H)$ and any $z_0 \in \Z$ one has to find 
$y = \Pl(u,z_0) \in \Czero(\clint{0,T};\H) \cap \BV(\clint{0,T};\H)$ such that 
\begin{align}
  & u(t) - y(t) \in \Z \quad \forall t \in \clint{0,T}, \label{CBV x in Z}\\
  & \int_{\clint{0,T}} \duality{z(t)-u(t)+y(t)}{\de\De y(t)} \le 
      \frac{1}{2r}\int_{\clint{0,T}} \norm{z(t)-u(t)+y(t)}{}^2 \de \vartot{\De y}(t) \notag\\
  & \hspace{45ex}\forall z \in \BV(\clint{0,T};\H), z(\clint{0,T}) \subseteq \Z, \label{CBV v.i.} \\
  & u(0) - y(0) = z_0. \label{i.c.}
\end{align}
\end{Pb}

The integrals in \eqref{CBV v.i.} are Lebesgue integrals with respect to the measures $\De y$
and $\vartot{\De y}$. The inequality can be equivalently written using Riemann-Stieltjes integrals, by virtue of \cite[Lemma A.9]{Rec11} and the discussion in Section \ref{differential measures}.

Problem \ref{CBVplay} can be equivalently stated as a differential inclusion. Indeed we have the following:

\begin{Prop}\label{int form}
Assume that \eqref{H-prel}, \eqref{Z}, \eqref{T} hold and that 
$u \in \Czero(\clint{0,T};\H) \cap \BV(\clint{0,T};\H)$ and $z_0 \in \Z$. Then a function 
$y \in \Czero(\clint{0,T};\H) \cap \BV(\clint{0,T};\H)$ is a solution to Problem \ref{CBVplay} if and only if there exists a measure $\mu : \borel(\clint{0,T}) \function \clsxint{0,\infty}$ and a function 
$v \in \L^1(\mu,\H)$ such that
\begin{align}
  & \De y = v \De \mu, \label{Dy=vmu}\\
  & u(t) - y(t) \in \Z \qquad \forall t \in \clint{0,T} \\
  & -v(t) \in N_{u(t)-\Z}(y(t)) \qquad \text{for $\mu$-a.e. $t \in \clint{0,T}$} \\
  & u(0) - y(0) = z_0. \label{i.c.2}
\end{align}
\end{Prop}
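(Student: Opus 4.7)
My plan is to exploit the polar decomposition of the vector measure $\De y$. For any $y$ solving Problem~\ref{CBVplay}, set $\mu := \vartot{\De y}$ and let $v \in \L^1(\mu;\H)$ be the Radon--Nikodym density of $\De y$ with respect to $\mu$, so that $\De y = v\mu$ and $\norm{v(t)}{} = 1$ for $\mu$-a.e.\ $t$. With this canonical choice of $(\mu,v)$, both directions of the claimed equivalence reduce to converting between the integral variational inequality \eqref{CBV v.i.} and the pointwise inclusion $-v(t) \in N_{u(t)-\Z}(y(t))$ for $\mu$-a.e.\ $t$.

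The implication from the differential inclusion to \eqref{CBV v.i.} follows directly from Theorem~\ref{charact proxreg}. Translations and reflections preserve prox-regularity, so $\K_t := u(t)-\Z$ is itself $r$-prox-regular; applied with $x = y(t)$, $n = -v(t) \in N_{\K_t}(y(t))$, and the test point $u(t)-z(t) \in \K_t$ for arbitrary $z(t) \in \Z$, the theorem yields
\[
  \duality{v(t)}{z(t)-u(t)+y(t)} \le \frac{\norm{v(t)}{}}{2r}\,\norm{z(t)-u(t)+y(t)}{}^2.
\]
Integrating against $\mu$ and using $\vartot{\De y} = \norm{v(\cdot)}{}\mu$ recovers precisely \eqref{CBV v.i.}.

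For the converse, the strategy is to localize \eqref{CBV v.i.}. For each $w \in \Z$ and each closed subinterval $A \subseteq \clint{0,T}$, the function
\[
  z(t) := w\,\indicator_A(t) + (u(t)-y(t))\,\indicator_{\clint{0,T}\setminus A}(t)
\]
lies in $\BV(\clint{0,T};\H)$ (at most two jumps plus a continuous $\BV$ remainder) and takes values in $\Z$ thanks to \eqref{CBV x in Z}. Since $z - u + y$ vanishes off $A$, substituting it into \eqref{CBV v.i.} leaves
\[
  \int_A \duality{w-u(t)+y(t)}{v(t)}\,\de\mu(t) \le \frac{1}{2r}\int_A \norm{w-u(t)+y(t)}{}^2\,\de\mu(t).
\]
The Lebesgue differentiation theorem for the finite Borel measure $\mu$ on $\clint{0,T}$, applied at Lebesgue points and combined with a density argument in $w$ over a countable dense subset together with the continuity of both sides in $w$, then yields
\[
  \duality{w-u(t_0)+y(t_0)}{v(t_0)} \le \frac{1}{2r}\norm{w-u(t_0)+y(t_0)}{}^2 \qquad \forall\, w \in \Z,\ \text{for $\mu$-a.e.\ $t_0$}.
\]

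To convert this pointwise estimate into the normal-cone inclusion, fix such a $t_0$, set $n := -v(t_0)$ (of unit norm) and $y_0 := y(t_0) + rn$; for any $z' \in \K_{t_0} = u(t_0)-\Z$, writing $w := u(t_0) - z' \in \Z$ and expanding gives
\[
  \norm{y_0-z'}{}^2 - \norm{y_0-y(t_0)}{}^2 = \norm{y(t_0)-z'}{}^2 + 2r\duality{v(t_0)}{w-u(t_0)+y(t_0)} \ge 0
\]
by the pointwise estimate, so $y(t_0) \in \Proj_{\K_{t_0}}(y_0)$ and hence $n \in N_{\K_{t_0}}(y(t_0))$ by the definition of the proximal normal cone. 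The main obstacle is the localization: the construction of admissible $\BV$ test functions with values in $\Z$ crucially uses the trick of taking $z = u-y$ outside the test interval, and the passage from an integrated to a pointwise inequality requires careful Lebesgue differentiation together with a density argument in $w \in \Z$ (in a non-separable $\H$ one must first localize to the separable subspace generated by the images of $u$ and $y$).
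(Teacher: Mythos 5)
Your route is genuinely different from the paper's. The paper proves Proposition \ref{int form} in two short steps: it first shows that \eqref{CBV v.i.} with $\BV$ test functions self-improves to right-continuous regulated test functions with values in $\Z$ (uniform approximation by step functions), and then it invokes \cite[Theorem 3.7]{KreMonRec22b}, where the equivalence between the integral inequality and the differential inclusion is already established. You instead give a direct, self-contained argument: the implication from the inclusion to \eqref{CBV v.i.} via Theorem \ref{charact proxreg} (correct, and your computation in fact works for an arbitrary pair $(\mu,v)$, as it must, using $\vartot{v\mu}=\norm{v(\cdot)}{}\mu$), and the converse by localizing \eqref{CBV v.i.} with test functions equal to a constant $w\in\Z$ on a subinterval and to $u-y$ elsewhere (admissible by \eqref{CBV x in Z}), then passing to a pointwise inequality and completing the square to identify a proximal normal. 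This essentially reproves the content of the cited theorem; it buys independence from \cite{KreMonRec22b} at the price of redoing measure-theoretic work the paper outsources. Two small slips: in the completing-the-square display the cross term should be $-2r\duality{v(t_0)}{w-u(t_0)+y(t_0)}$, not $+2r\duality{v(t_0)}{w-u(t_0)+y(t_0)}$ (with the correct sign the pointwise estimate does give nonnegativity); and the Lebesgue differentiation step can be avoided by noting that $\int_A f_w\,\de\mu\le 0$ for all subintervals $A$ forces $\int_B f_w\,\de\mu\le 0$ for all Borel $B$, hence $f_w\le 0$ $\mu$-a.e.

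The one genuine gap is the interchange of ``for every $w\in\Z$'' with ``for $\mu$-a.e.\ $t$''. Your countable-density argument needs a countable dense subset of (the relevant bounded portion of) $\Z$, and the paper does not assume $\H$ separable; your parenthetical fix, restricting to the separable subspace generated by the images of $u$ and $y$, does not help, because the test points $w$ range over all of $\Z$ and need not lie in that subspace. A correct repair goes as follows: since $\norm{v(t)}{}=1$ $\mu$-a.e., the desired pointwise statement is equivalent to $\d_\Z(u(t)-y(t)+r\,v(t))\ge r$ for $\mu$-a.e.\ $t$; the map $t\mapsto u(t)-y(t)+r\,v(t)$ is Bochner $\mu$-measurable, hence essentially separably valued, so if the statement failed on a set of positive $\mu$-measure one could cover the essential range by countably many balls of radius $\eps/2$ and find a single $w_*\in\Z$ with $\norm{w_*-u(t)+y(t)-r\,v(t)}{}<r$ on a set of positive $\mu$-measure, contradicting the a.e.\ inequality you already established for the fixed test point $w_*$. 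With this adjustment (and the sign fix) your argument is complete and, unlike the countable-density version, valid in a nonseparable $\H$.
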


\begin{proof}
First of all let us show that \eqref{CBV v.i.} can be equivalently written either with 
$z \in \BV(\clint{0,T};\H)$ or with a right-continuous $z \in \Reg(\clint{0,T};\H)$, the space of 
$\H$-valued regulated functions on $\clint{0,T}$, i.e. those functions $z : \clint{0,T} \function \H$ for which there exists one-sided limits $z(t-)$ and $z(t+)$ for every $t \in \clint{0,T}$, with the convention that $z(0-) = z(0)$ and $z(T+) = z(T)$. Indeed if $y \in \Czero(\clint{0,T};\H) \cap \BV(\clint{0,T};\H)$ satisfies \eqref{CBV v.i.} and if $z \in \Reg(\clint{0,T};\H)$, then by 
\cite[Theorem 3, Section 2.1]{Bou58} there exists a sequence of step functions 
$z_n \in \BV(\clint{0,T};\H)$ such that $z_n \to z$ uniformly on $\clint{0,T}$. Hence taking the limit in \eqref{CBV v.i.} with $z$ replaced by $z_n$, we get that \eqref{CBV v.i.} holds with any 
right-continuous $z \in \Reg(\clint{0,T};\H)$ such that $z(\clint{0,T}) \subseteq \Z$. Now we can conclude by recalling that by \cite[Theorem 3.7]{KreMonRec22b} we have that 
\eqref{CBV x in Z}--\eqref{i.c.} is equivalent to the existence of a positive measure $\mu$ and of a function $v \in \L^1(\mu;\clint{0,T})$ such that \eqref{Dy=vmu}--\eqref{i.c.2} hold.
\end{proof}

The first proof of the existence of a solution to the non-convex Problem \ref{CBVplay} can be found in \cite{EdmThi06} (but see also \cite{CasMon96,ColGon99, Ben00, BouThi05}). To be more precise, in \cite[Corollary 3.1]{EdmThi06} it is proved that there exists a unique solution to 
\eqref{Dy=vmu}--\eqref{i.c.2}. Thus by virtue of Proposition \ref{int form} we have the following:

\begin{Thm}\label{CBVExist}
Assume that \eqref{H-prel}, \eqref{Z}, \eqref{T} hold. Then Problem \ref{CBVplay} has a unique solution for any 
$u \in \Czero(\clint{0,T};\H) \cap \BV(\clint{0,T};\H)$ and any $z_0 \in \Z$.
\end{Thm}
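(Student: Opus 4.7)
The plan is to derive Theorem \ref{CBVExist} as a direct corollary of Proposition \ref{int form} combined with the existence/uniqueness result of Edmond--Thibault for perturbed sweeping processes with prox-regular moving sets. The main point is that Proposition \ref{int form} has already done all the structural work: it recasts the integral variational inequality \eqref{CBV v.i.} in the differential-inclusion language \eqref{Dy=vmu}--\eqref{i.c.2} that is standard in the sweeping-process literature, so the remaining task is essentially to check that the setting of \cite{EdmThi06} applies.

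First I would introduce the moving set
\[
  C(t) := u(t) - \Z, \qquad t \in \clint{0,T}.
\]
Since translation preserves prox-regularity, hypothesis \eqref{Z} implies that $C(t)$ is $r$-prox-regular for every $t \in \clint{0,T}$. Since $u \in \Czero(\clint{0,T};\H) \cap \BV(\clint{0,T};\H)$, the Hausdorff-distance estimate $\hausd(C(s),C(t)) \le \norm{u(s)-u(t)}{}$ shows that $t \mapsto C(t)$ is continuous and of bounded variation (as a set-valued map) with variation dominated by $\V(u,\clint{0,T})$. The compatibility condition for initial data amounts to $u(0)-z_0 \in C(0)$, which is immediate because $z_0 \in \Z$ is equivalent to $u(0) - (u(0)-z_0) \in \Z$, i.e.\ the required initial point $y(0) = u(0)-z_0$ satisfies $u(0) - y(0) \in \Z$.

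Next I would invoke \cite[Corollary~3.1]{EdmThi06}: under the above hypotheses on $C(\cdot)$ and the initial condition, there exists a unique $y \in \Czero(\clint{0,T};\H) \cap \BV(\clint{0,T};\H)$ together with a positive bounded Borel measure $\mu$ on $\clint{0,T}$ and a density $v \in \L^1(\mu;\H)$ such that \eqref{Dy=vmu}--\eqref{i.c.2} hold, where the inclusion $-v(t) \in N_{C(t)}(y(t))$ is precisely the one appearing in Proposition \ref{int form}. Applying Proposition \ref{int form} in the reverse direction converts this unique triple $(y,\mu,v)$ into the unique solution $y$ of Problem \ref{CBVplay}.

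I do not expect any genuinely hard step: all the analytic work has been absorbed either into Theorem \ref{charact proxreg} (used inside Proposition \ref{int form} to handle the $r$-prox-regular correction term $\tfrac{1}{2r}\int \|z-u+y\|^2\,\de\vartot{\De y}$) or into the deep existence theorem of Edmond--Thibault. The only point requiring care is the translation between the two equivalent forms of the variational inequality -- in particular, the equivalence of testing \eqref{CBV v.i.} against BV test functions versus right-continuous regulated test functions -- but this has already been carried out inside the proof of Proposition \ref{int form}, so for Theorem \ref{CBVExist} itself the argument reduces to a one-line quotation of the two preceding results.
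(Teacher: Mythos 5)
Your proposal is correct and follows essentially the same route as the paper: the authors also obtain Theorem \ref{CBVExist} by combining Proposition \ref{int form} with the existence and uniqueness result of \cite[Corollary 3.1]{EdmThi06} for the sweeping process formulation \eqref{Dy=vmu}--\eqref{i.c.2}. Your additional verification that the moving set $u(t)-\Z$ inherits $r$-prox-regularity and bounded variation from \eqref{Z} and from $u$ is a harmless elaboration of what the paper leaves implicit.
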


Another proof of Theorem \ref{CBVExist} can be found in \cite{KreMonRec22a}, exclusively within the framework of the integral formulation.

\begin{Def}
The solution operator 
$\Pl : \Czero(\clint{0,T};\H) \cap \BV(\clint{0,T};\H) \times \Z \function 
\Czero(\clint{0,T};\H) \cap \BV(\clint{0,T};\H)$ associating to every 
$(u,z_0) \in \Czero(\clint{0,T};\H) \cap \BV(\clint{0,T};\H) \times \Z$ the unique solution 
$y = \Pl(u,z_0)$ of Problem \ref{CBVplay}, is called the \emph{(non-convex) play operator}.
\end{Def}

When the ``input'' function $u$ of Problem \ref{CBVplay} is more regular, we have the following 
well-known characterization of solutions (see, e.g., \cite[Corollary 6.3]{KreMonRec22a}).

\begin{Prop}
Assume that \eqref{H-prel}, \eqref{Z}, \eqref{T} hold. If $u \in \W^{1,p}(\clint{0,T};\H)$, $z_0 \in \Z$,
and if $y = \Pl(u,z_0)$ is the solution of Problem \ref{CBVplay}, then $y \in \W^{1,p}(\clint{0,T};\H)$
and
\begin{align}
  & u(t) - y(t) \in \Z \quad \forall t \in \clint{0,T}, \\
  & \duality{z-u(t)+y(t)}{y'(t)} \le \frac{\norm{y'(t)}{}}{2r} \norm{z(t)-u(t)+y(t)}{}^2 \quad 
      \text{for $\leb^1$-a.e. $t \in \clint{0,T}$, $\forall z \in \Z$,}  \label{W1p v.i.} \\
  & u(0) - y(0) = z_0. \label{W1p i.c.}
\end{align}
Moreover $y$ is the unique function in $\W^{1,p}(\clint{0,T};\H)$ such that 
\eqref{W1p v.i.}--\eqref{W1p i.c.} holds.
\end{Prop}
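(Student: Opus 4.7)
The plan is to upgrade the $\BV$ solution provided by Theorem \ref{CBVExist} to a $\W^{1,p}$ solution by using the differential-inclusion formulation of Proposition \ref{int form} together with classical absolute continuity results for prox-regular sweeping processes, and then to extract the pointwise variational inequality \eqref{W1p v.i.} from the normal cone characterization of prox-regularity in Theorem \ref{charact proxreg}.

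First I would invoke Theorem \ref{CBVExist} and Proposition \ref{int form} to produce the unique $y = \Pl(u,z_0) \in \Czero(\clint{0,T};\H) \cap \BV(\clint{0,T};\H)$ together with a positive Borel measure $\mu$ on $\clint{0,T}$ and $v \in \L^1(\mu;\H)$ such that $\De y = v\De\mu$, $u(t)-y(t) \in \Z$ for every $t$, and $-v(t) \in N_{u(t)-\Z}(y(t))$ for $\mu$-a.e.\ $t$. Since $\Z$ is $r$-prox-regular, so is each translate $C(t) := u(t)-\Z$, and the inclusion above is exactly a Moreau-type sweeping process driven by $C(\cdot)$.

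The main obstacle is to show $y \in \W^{1,p}(\clint{0,T};\H)$. The assumption $u \in \W^{1,p}(\clint{0,T};\H) \subseteq \AC(\clint{0,T};\H)$ makes $C(\cdot)$ absolutely continuous in the Hausdorff sense, with $d_H(C(s),C(t)) \le \norm{u(t)-u(s)}{} \le \int_s^t \norm{u'(\tau)}{}\de\tau$. The classical absolute continuity theory for prox-regular sweeping processes (see \cite{EdmThi06}, or the related statement in \cite[Corollary 6.3]{KreMonRec22a}) then provides $y \in \AC(\clint{0,T};\H)$ with the pointwise estimate $\norm{y'(t)}{} \le \norm{u'(t)}{}$ for $\leb^1$-a.e.\ $t \in \clint{0,T}$, so that $y' \in \L^p(\clint{0,T};\H)$ and $y \in \W^{1,p}(\clint{0,T};\H)$. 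Reapplying Proposition \ref{int form} with the choice $\mu = \leb^1$ and $v = y'$, which is admissible because $\De y = y'\leb^1$, we obtain the pointwise inclusion $-y'(t) \in N_{u(t)-\Z}(y(t))$ for $\leb^1$-a.e.\ $t$.

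From this pointwise inclusion the variational inequality \eqref{W1p v.i.} is immediate from Theorem \ref{charact proxreg} applied to the $r$-prox-regular set $u(t)-\Z$ at the point $y(t)$ with normal vector $n = -y'(t)$: for every $z \in \Z$ the point $u(t)-z$ lies in $u(t)-\Z$, and Theorem \ref{charact proxreg} yields
\begin{equation*}
  \duality{-y'(t)}{(u(t)-z)-y(t)} \le \frac{\norm{y'(t)}{}}{2r}\, \norm{(u(t)-z)-y(t)}{}^2,
\end{equation*}
which rearranges to \eqref{W1p v.i.}. For uniqueness in $\W^{1,p}$, any function $y \in \W^{1,p}(\clint{0,T};\H)$ satisfying \eqref{W1p v.i.}--\eqref{W1p i.c.} together with $u(t)-y(t) \in \Z$ has $\De y = y'\leb^1$, and integrating \eqref{W1p v.i.} against $\de t$ over $\clint{0,T}$ recovers the $\BV$ integral inequality \eqref{CBV v.i.}; thus $y$ solves Problem \ref{CBVplay} and must coincide with $\Pl(u,z_0)$ by Theorem \ref{CBVExist}.
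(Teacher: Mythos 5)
Your proposal is correct, but note that the paper itself does not prove this proposition: it is presented as a well-known characterization with a pointer to \cite[Corollary 6.3]{KreMonRec22a}, so there is no internal proof to compare against. Your reconstruction is the standard argument behind that citation: view $y$ as the solution of the sweeping process driven by the translates $C(t)=u(t)-\Z$ (Proposition \ref{int form}), invoke the absolutely continuous theory for prox-regular sweeping processes to get an absolutely continuous solution with $\norm{y'(t)}{}\le\norm{u'(t)}{}$ for $\leb^1$-a.e.\ $t$ (hence membership in $\W^{1,p}(\clint{0,T};\H)$), pass to the pointwise normal-cone inclusion, and convert it into \eqref{W1p v.i.} via Theorem \ref{charact proxreg} applied to the $r$-prox-regular translate $u(t)-\Z$ with $n=-y'(t)$. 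Two steps deserve more explicit wording, though neither is a real gap. First, the absolutely continuous theory produces \emph{some} AC solution of the inclusion; it is only through uniqueness in Theorem \ref{CBVExist}, via Proposition \ref{int form} with $\mu=\leb^1$ and $v=y'$, that this solution is identified with $\Pl(u,z_0)$ — your phrasing slightly conflates the two objects, but the logic is available and is what your ``reapplying Proposition \ref{int form}'' step amounts to. Second, in the uniqueness half, integrating \eqref{W1p v.i.} against a time-dependent test curve $z\in\BV(\clint{0,T};\H)$ with values in $\Z$ uses that the exceptional $\leb^1$-null set in \eqref{W1p v.i.} can be chosen independently of $z\in\Z$ (which is how the statement is phrased and exactly what your normal-cone derivation delivers) together with the strong measurability of $t\mapsto z(t)$ (BV functions are regulated, hence uniform limits of step functions); after that, $\De y=y'\leb^1$ and $\vartot{\De y}=\norm{y'(\cdot)}{}\leb^1$ turn the integrated inequality into \eqref{CBV v.i.} verbatim, and uniqueness follows from Theorem \ref{CBVExist} as you say.
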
 

Now we can state our main theorems. The first result states that $\Pl$ is continuous with respect to the $\BV$-norm on $\Czero(\clint{0,T};\H) \cap \BV(\clint{0,T};\H)$.

\begin{Thm}\label{T:BVnorm cont}
Assume that \eqref{H-prel}, \eqref{Z}, \eqref{T} hold. The play operator 
\newline $\Pl : \Czero(\clint{0,T};\H) \cap \BV(\clint{0,T};\H) \times \Z \function 
\Czero(\clint{0,T};\H) \cap \BV(\clint{0,T};\H)$ is continuous with respect to the $\BV$-norm \eqref{def BVnorm}, i.e. if 
\[
  \norm{u-u_n}{\BV} \to 0, \quad \norm{z_0 - z_{0n}}{} \to 0 \qquad \text{as $n \to \infty$},
\]
then
\[
 \norm{\Pl(u,z_0) - \Pl(u_n,z_{0n})}{\BV} \to 0 \qquad \text{as $n \to \infty$}
\]
whenever $u, u_n \in \Czero(\clint{0,T};\H) \cap \BV(\clint{0,T};\H)$ and $z_0, z_{0,n} \in \Z$ for every $n \in \en$.
\end{Thm}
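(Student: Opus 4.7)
The plan is to reduce the $\BV$-continuity of the non-convex play operator to continuity on Lipschitz inputs by a time-reparametrization argument that relies crucially on the rate independence of $\Pl$. The proof would split into three main steps followed by the conclusion.

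First, I would establish the rate independence of $\Pl$: if $\phi : \clint{0,T} \function \clint{0,T}$ is a nondecreasing surjective Lipschitz reparametrization and $u \in \Czero(\clint{0,T};\H) \cap \BV(\clint{0,T};\H)$, $z_0 \in \Z$, then $\Pl(u \circ \phi, z_0) = \Pl(u, z_0) \circ \phi$. Indeed, both Lebesgue--Stieltjes integrals in \eqref{CBV v.i.} are invariant under the change of variable $t = \phi(s)$, so $y \circ \phi$ solves Problem \ref{CBVplay} with input $u \circ \phi$, and uniqueness from Theorem \ref{CBVExist} closes the argument. Second, I would prove that $\Pl$ is continuous with respect to the $\W^{1,1}$-norm on $\W^{1,1}(\clint{0,T};\H) \times \Z$ for an arbitrary $r$-prox-regular $\Z$. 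Given Lipschitz inputs $u, u_n$, an a priori bound on $\norm{y'}{}$ and $\norm{y_n'}{}$ confines $u-y$ and $u_n-y_n$ to a fixed $r$-neighbourhood of $\Z$, so that the test points $z = \Proj_\Z(u_n(t)-y_n(t))$ and its symmetric counterpart are admissible in the pointwise inequality \eqref{W1p v.i.}. Together with the prox-regularity estimate of Theorem \ref{charact proxreg}, this produces a Gronwall-type inequality for $\norm{y-y_n}{}^2$ from which uniform convergence follows, and the pointwise inequality then upgrades uniform convergence to $\W^{1,1}$-convergence.

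Third, given $u_n \to u$ in $\BV$-norm and $z_{0,n} \to z_0$, I would introduce
$V_n(t) := \V(u_n,\clint{0,t})$, $V(t) := \V(u,\clint{0,t})$ and the reparametrizations
$\psi_n(t) := t + V_n(t)$, $\psi(t) := t + V(t)$. These are continuous strictly increasing surjections whose inverses are $1$-Lipschitz, so that $\tilde u_n := u_n \circ \psi_n^{-1}$ and $\tilde u := u \circ \psi^{-1}$ are $1$-Lipschitz on their respective ranges. After an affine rescaling to a common interval, the $\BV$-convergence $u_n \to u$ (which forces uniform convergence $V_n \to V$ and $\psi_n \to \psi$) translates into $\W^{1,1}$-convergence of the reparametrized inputs. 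By the rate independence of Step 1 we have $\Pl(u_n, z_{0,n}) = \Pl(\tilde u_n, z_{0,n}) \circ \psi_n$ and $\Pl(u, z_0) = \Pl(\tilde u, z_0) \circ \psi$, so the Lipschitz-level continuity of Step 2, combined with the uniform convergence of the reparametrizations, yields the required $\BV$-norm convergence of the outputs.

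The principal obstacle lies in Step 2: obtaining $\W^{1,1}$-continuity for a general prox-regular $\Z$ without the sublevel-set hypothesis used in \cite{KreMonRec22b}. The non-convex correction $\norm{z-u+y}{}^2/(2r)$ in \eqref{W1p v.i.} destroys the monotonicity available in the convex case, and becomes tractable only once a quantitative a priori confinement of $u-y$ and $u_n-y_n$ inside the single-valued region of the projection is extracted from the variational inequality itself; this confinement, together with the subsequent Gronwall argument, is the delicate technical heart of the proof.
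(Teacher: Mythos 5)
Your outline shares the paper's overall philosophy (rate independence plus reduction to Lipschitz inputs), but the two places where you wave your hands are precisely where the real work lies, so as it stands the proposal has genuine gaps. In Step 2, the Gronwall argument only gives uniform convergence of $y_n$ to $y$ (this is essentially the estimate the paper simply quotes from the proof of \cite[Theorem 5.5]{KreMonRec22a}); the assertion that ``the pointwise inequality then upgrades uniform convergence to $\W^{1,1}$-convergence'' is not an argument, and no mechanism for convergence of the derivatives is offered. Incidentally, your worry about confining $u-y$ and $u_n-y_n$ in the single-valued region of the projection is misplaced: by \eqref{CBV x in Z} these functions take values in $\Z$ itself, so cross-testing with $z=u_n(t)-y_n(t)$ in \eqref{W1p v.i.} is admissible and the hypomonotonicity of Theorem \ref{charact proxreg} applies directly; the delicate point is not admissibility of test points but derivative convergence. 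The paper's mechanism for that is the normality rule of Proposition \ref{S and Q}, $\duality{y'(t)}{x'(t)}=0$ a.e.\ with $x=u-y$, which gives $\norm{(2y_n-u_n)'(t)}{}=\norm{u_n'(t)}{}$ a.e.; one then combines convergence of the $\L^2$-norms with weak $\L^2$ convergence (obtained from boundedness and the uniform convergence of $w_n=2y_n-u_n$) and the Hilbert-space fact that weak convergence plus convergence of norms implies strong convergence. Some ingredient of this kind is indispensable and is entirely absent from your plan.

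Step 3 contains a second unproven claim of comparable depth: that $\norm{u_n-u}{\BV}\to 0$ forces $\W^{1,1}$-convergence of the reparametrized inputs $\utilde_n=u_n\circ\psi_n^{-1}$. BV-norm convergence does give $\V(V_n-V,\clint{0,T})\le \V(u_n-u,\clint{0,T})\to 0$ and hence uniform convergence of $\psi_n$ and of the reparametrizations, but $\L^1$-convergence of the reparametrized derivatives is not automatic and you give no proof; likewise the back-composition step needs an argument that $\V(\widetilde{y}\circ\psi_n-\widetilde{y}\circ\psi,\clint{0,T})\to 0$ when $\psi_n$ is merely continuous increasing. The paper avoids both issues by a different route: it parametrizes by the normalized arc length, so that the reparametrized derivative has constant norm $\V(u_n,\clint{0,T})/T$ (Proposition \ref{P:ftilde}, \eqref{|Q'|=1}), writes $\De\Q(u_n,z_{0n})=h_n\De\ell_n$ via the rate independence of $\Q$ (Lemma \ref{L:Q(v)}), and proves $h_n\to h$ strongly in $\L^2(\De\ell;\H)$ through a Dunford--Pettis weak-compactness argument for the measures $\De w_n$, identification of the weak limit by the uniform convergence $w_n\to w$, and again the weak-plus-norms trick; no $\W^{1,1}$-convergence of reparametrized inputs is ever claimed or needed. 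Finally, note that you state rate independence only for Lipschitz $\phi$, whereas your $\psi_n(t)=t+V_n(t)$ is merely continuous; Theorem \ref{th:P r.i.} is proved for continuous nondecreasing surjections, and its proof has to (and does) handle the intervals where $\phi$ is constant when relating the test functions of the two problems --- your one-line change-of-variables claim glosses over exactly this point.
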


We will also prove that the play operator is continuous with respect to the strict metric.

\begin{Thm}\label{T:BVstrict cont}
Assume that \eqref{H-prel}, \eqref{Z}, \eqref{T} hold. The play operator 
$\Pl : \Czero(\clint{0,T};\H) \cap \BV(\clint{0,T};\H) \times \Z \function 
\Czero(\clint{0,T};\H) \cap \BV(\clint{0,T};\H)$ is continuous with respect to the strict metric $\d_s$ \eqref{def strictBV}, i.e. if 
\[
  \d_s(u,u_n)\to 0, \quad \norm{z_0 - z_{0n}}{} \to 0 \qquad \text{as $n \to \infty$},
\]
then
\[
 \d_s(\Pl(u,z_0), \Pl(u_n,z_{0n})) \to 0 \qquad \text{as $n \to \infty$}
\]
whenever $u, u_n \in \Czero(\clint{0,T};\H) \cap \BV(\clint{0,T};\H)$ and $z_0, z_{0,n} \in \Z$ for every $n \in \en$.
\end{Thm}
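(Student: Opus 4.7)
The plan is to parallel the reparametrization strategy used to prove Theorem \ref{T:BVnorm cont}, exploiting the rate independence of $\Pl$ (established earlier in this paper for the first time in the non-convex setting) to reduce the strict-continuity claim on $\Czero(\clint{0,T};\H) \cap \BV(\clint{0,T};\H)$ to an analogous statement on uniformly Lipschitz functions defined on a common time interval. Given $(u,z_0)$ and $(u_n,z_{0n})$ with $d_s(u_n,u) \to 0$ and $z_{0n} \to z_0$, I introduce the continuous, strictly increasing functions $\tau_n(t) := t + V_{u_n}(t)$ and $\tau(t) := t + V_u(t)$ with inverses $\phi_n, \phi$; after extending by constants to a common interval $\clint{0,S}$ with $S$ chosen above $\sup_n \tau_n(T)$ (finite since $V_{u_n}(T) \to V_u(T)$), the functions $\widetilde{u}_n := u_n \circ \phi_n$ and $\widetilde{u} := u \circ \phi$ are $1$-Lipschitz on $\clint{0,S}$, and rate independence yields $\Pl(u_n,z_{0n}) = \Pl(\widetilde{u}_n,z_{0n}) \circ \tau_n$ and $\Pl(u,z_0) = \Pl(\widetilde{u},z_0) \circ \tau$. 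Since both the supremum norm and the total variation of a function on $\clint{0,T}$ coincide with those of its pullback on $\clint{0,S}$ under these monotone bijections (the constant extensions contributing nothing to variation), it suffices to prove the strict convergence of the reparametrized outputs $\widetilde{y}_n := \Pl(\widetilde{u}_n,z_{0n})$ to $\widetilde{y} := \Pl(\widetilde{u},z_0)$ on $\clint{0,S}$.

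The continuity of $u$ makes $V_u$ continuous on $\clint{0,T}$; combining the convergence $V_{u_n}(T) \to V_u(T)$ with the pointwise lower semicontinuity of variation under uniform convergence and the classical fact that pointwise convergence of monotone functions to a continuous limit is uniform, I obtain $V_{u_n} \to V_u$ uniformly on $\clint{0,T}$. Therefore $\tau_n \to \tau$ and $\phi_n \to \phi$ uniformly, which, together with the uniform continuity of $u$, yields $\widetilde{u}_n \to \widetilde{u}$ uniformly on $\clint{0,S}$, while $V(\widetilde{u}_n,\clint{0,S}) = V_{u_n}(T) \to V_u(T) = V(\widetilde{u},\clint{0,S})$. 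Thus $\widetilde{u}_n \to \widetilde{u}$ strictly on $\clint{0,S}$ within the Lipschitz subspace, all $\widetilde{u}_n$ sharing the Lipschitz constant $1$. The uniform convergence $\widetilde{y}_n \to \widetilde{y}$ on $\clint{0,S}$ then follows from a standard sup-norm continuous-dependence estimate for $\Pl$ acting on Lipschitz inputs, extracted from the pointwise variational inequality \eqref{W1p v.i.} together with Theorem \ref{charact proxreg} via a Gronwall argument.

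The main obstacle is the convergence $V(\widetilde{y}_n,\clint{0,S}) \to V(\widetilde{y},\clint{0,S})$. Lower semicontinuity of variation under uniform convergence gives $V(\widetilde{y},\clint{0,S}) \le \liminf V(\widetilde{y}_n,\clint{0,S})$ for free, but the reverse bound cannot be read off from Theorem \ref{T:BVnorm cont}: strict convergence of uniformly Lipschitz functions does not entail $\BV$-norm convergence, since the derivatives $\widetilde{u}_n'$ may fail to converge strongly in $L^1$ (as simple translation examples show), so the $\BV$-continuity of $\Pl$ is not directly applicable to $\widetilde{u}_n \to \widetilde{u}$. I would establish $\limsup V(\widetilde{y}_n,\clint{0,S}) \le V(\widetilde{y},\clint{0,S})$ by exploiting the $W^{1,p}$-form \eqref{W1p v.i.} of the variational inequality, testing it for $\widetilde{y}_n$ against the competitor $\widetilde{u}(s)-\widetilde{y}(s)$ and for $\widetilde{y}$ against $\widetilde{u}_n(s)-\widetilde{y}_n(s)$, summing the two inequalities, and integrating in $s \in \clint{0,S}$. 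The prox-regularity correction produces quadratic terms that can be absorbed using the uniform closeness of $\widetilde{u}_n$ and $\widetilde{u}$ and the uniform bound on $V(\widetilde{y}_n)$, while the strict convergence $V(\widetilde{u}_n) \to V(\widetilde{u})$ controls the limsup of $\int_0^S \norm{\widetilde{y}_n'(s)}{}\de s = V(\widetilde{y}_n,\clint{0,S})$. Once this bound is in hand, strict convergence of $\widetilde{y}_n \to \widetilde{y}$ on $\clint{0,S}$ transfers back through rate independence to $\Pl(u_n,z_{0n}) \to \Pl(u,z_0)$ strictly on $\clint{0,T}$.
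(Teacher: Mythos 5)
Your reduction via rate independence to uniformly Lipschitz reparametrized inputs is in the right spirit, but the crucial step --- $\limsup_n \V(\widetilde y_n,\clint{0,S}) \le \V(\widetilde y,\clint{0,S})$ --- is not actually proved, and the argument you sketch for it would not deliver it. Testing \eqref{W1p v.i.} symmetrically (for $\widetilde y_n$ with competitor $\widetilde u(s)-\widetilde y(s)$ and for $\widetilde y$ with $\widetilde u_n(s)-\widetilde y_n(s)$), summing and integrating is exactly the standard stability computation: its left-hand side becomes $\tfrac12\frac{\de}{\de s}\norm{\widetilde y_n-\widetilde y}{}^2$ plus a cross term in $\widetilde u_n-\widetilde u$, and after Gronwall it yields only the sup-norm estimate $\norm{\widetilde y_n-\widetilde y}{\infty}\to 0$ (which is already known from \cite{KreMonRec22a}); no term comparing $\int_0^S\norm{\widetilde y_n'}{}\de s$ with $\int_0^S\norm{\widetilde y'}{}\de s$ ever appears, and the normality rule only gives the useless bound $\V(\widetilde y_n)\le \V(\widetilde u_n)\to \V(\widetilde u)$. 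Moreover the difficulty you run into is created by your choice of time change $\tau_n(t)=t+V_{u_n}(t)$: with that parametrization $\widetilde u_n'\to\widetilde u'$ in $\L^1$ can genuinely fail (take $u=\mathrm{id}$ on $\clint{0,1}$ and $u_n$ piecewise linear with slopes alternating between $0$ and $2$ on intervals of length $1/(2n)$: then $\d_s(u_n,u)\to 0$ but $\norm{\widetilde u_n'-\widetilde u'}{\L^1}\ge \tfrac12$), so you are correct that Theorem \ref{T:BVnorm cont} cannot be invoked there, but you are then left with an unproven core claim.

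The gap disappears if you reparametrize by the \emph{normalized arc length} \eqref{ell_f} instead, i.e.\ use constant-speed representatives as in Proposition \ref{P:ftilde}: then $\norm{\widetilde u_n'(\sigma)}{}\equiv \V(u_n,\clint{0,T})/T$ a.e., so strict convergence of $u_n$ gives $\norm{\widetilde u_n'}{\L^2}\to\norm{\widetilde u'}{\L^2}$, while uniform convergence of $\widetilde u_n$ to $\widetilde u$ (via $\ell_{u_n}\to\ell_u$ uniformly) identifies the weak $\L^2$ limit of $\widetilde u_n'$ as $\widetilde u'$; the Hilbert-space weak-plus-norm argument then upgrades this to $\widetilde u_n\to\widetilde u$ in the $\BV$-norm, so Theorem \ref{T:BVnorm cont} applies directly to the reparametrized inputs, and transferring back through $\ell_{u_n}\to\ell_u$ (variation being invariant under these surjective monotone time changes) gives strict convergence of the outputs. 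This reparametrization-transfer principle is precisely what the paper invokes: its proof of Theorem \ref{T:BVstrict cont} is a one-line deduction from Theorem \ref{T:BVnorm cont} together with \cite[Theorem 3.4]{Rec11}, whose content is exactly that a rate-independent, $\BV$-norm continuous operator is strictly continuous on $\Czero(\clint{0,T};\H)\cap\BV(\clint{0,T};\H)$. (A further minor point: Theorem \ref{th:P r.i.} is stated for surjections of $\clint{0,T}$ onto itself, so your use of it with $\tau_n:\clint{0,T}\function\clint{0,\tau_n(T)}\subsetneq\clint{0,S}$ and constant extensions needs an extra restriction/concatenation remark, whereas the normalized arc length keeps everything on $\clint{0,T}$.)
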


The proofs of our main theorems are strongly based on the fact that the play operator is rate independent, which is the property \eqref{P r.i.} of $\Pl$ proved in the following theorem.

\begin{Thm}\label{th:P r.i.}
Assume that \eqref{H-prel}, \eqref{Z}, \eqref{T} hold, $u \in \Czero(\clint{0,T};\H) \cap \BV(\clint{0,T};\H)$, and $z_0 \in \Z$. If $\phi : \clint{0,T} \function \clint{0,T}$ is a continuous function such that 
$(\phi(t) - \phi(s))(t-s) \ge 0$ and $\phi(\clint{0,T}) = \clint{0,T}$ and $ \Pl(u,z_0)$ is the solution of Problem \ref{CBVplay}, then 
\begin{equation}\label{P r.i.}
  \Pl(u \circ \phi,z_0) = \Pl(u,z_0) \circ \phi.
\end{equation}
\end{Thm}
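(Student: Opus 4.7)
The plan is to show that $\tilde y := y \circ \phi$ together with $\tilde u := u \circ \phi$ solves Problem~\ref{CBVplay} with initial datum $z_0$; uniqueness (Theorem~\ref{CBVExist}) then forces $\Pl(\tilde u, z_0) = \tilde y$, which is exactly \eqref{P r.i.}. Since $\phi$ is continuous, non-decreasing and surjective, $\phi(0)=0$, $\phi(T)=T$, and the compositions lie in $\Czero(\clint{0,T};\H) \cap \BV(\clint{0,T};\H)$ with $\V(\tilde y,\clint{0,t}) = \V(y,\clint{0,\phi(t)})$. The initial condition $\tilde u(0)-\tilde y(0)=z_0$ and the constraint $\tilde u(t) - \tilde y(t) = (u-y)(\phi(t)) \in \Z$ are immediate, so the only real task is the variational inequality \eqref{CBV v.i.} for $\tilde y$.

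To produce a valid test function for $y$ out of an arbitrary test function for $\tilde y$, I would define the generalized left inverse
\[
  \psi(s) := \min \phi^{-1}(\{s\}), \qquad s \in \clint{0,T},
\]
which is well-defined and non-decreasing by continuity and surjectivity of $\phi$. Given $\tilde\zeta \in \BV(\clint{0,T};\H)$ with $\tilde\zeta(\clint{0,T}) \subseteq \Z$, I would set $\zeta := \tilde\zeta \circ \psi$; monotonicity of $\psi$ gives $\V(\zeta,\clint{0,T}) \le \V(\tilde\zeta,\clint{0,T})$ and $\zeta(\clint{0,T}) \subseteq \Z$, so $\zeta$ is a legitimate test function in \eqref{CBV v.i.} for $y$.

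The heart of the argument is a change of variables $s = \phi(t)$. First I would establish the measure-theoretic identity
\[
  \De\tilde y(\phi^{-1}(B)) = \De y(B), \qquad \vartot{\De\tilde y}(\phi^{-1}(B)) = \vartot{\De y}(B),\qquad \forall B \in \borel(\clint{0,T}),
\]
by checking it on half-open intervals $\cldxint{a,b}$ — noting that $\phi^{-1}(\cldxint{a,b})$ is itself an interval whose endpoints $\alpha,\beta$ satisfy $\phi(\alpha)=a$, $\phi(\beta)=b$ by continuity, so that $\tilde y(\beta) - \tilde y(\alpha) = y(b) - y(a)$ — and then extending to all Borel sets by the uniqueness of a vector measure determined on a generating $\pi$-system. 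The standard pushforward formula then yields, for every bounded Borel $h:\clint{0,T}\to\H$,
\[
  \int_{\clint{0,T}} \duality{h(\phi(t))}{\de\De\tilde y(t)} = \int_{\clint{0,T}} \duality{h(s)}{\de\De y(s)},
\]
and likewise for the scalar total-variation measure. Applying this to \eqref{CBV v.i.} for $y$ with the choice $h := \zeta - u + y$ transforms it into the target inequality for $\tilde y$, except that $\zeta \circ \phi$ appears in place of $\tilde\zeta$.

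Finally, I would close the gap between $\zeta\circ\phi$ and $\tilde\zeta$. The equality $\psi(\phi(t)) \ne t$ can only occur when $t$ belongs to a maximal flat interval $\clint{\alpha,\beta}$ of $\phi$ (with $t>\alpha$); on such an interval $\tilde y = y \circ \phi$ is constant, and continuity of $\tilde y$ forces $\vartot{\De\tilde y}(\clint{\alpha,\beta}) = \V(\tilde y,\clint{\alpha,\beta}) = 0$. Hence $\zeta\circ\phi = \tilde\zeta$ $\vartot{\De\tilde y}$-almost everywhere, the substitution is harmless in both integrals, \eqref{CBV v.i.} holds for $\tilde y$, and Theorem~\ref{CBVExist} concludes the proof. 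The main technical obstacle is the measure-theoretic pullback identity and the attendant vector-valued change-of-variables formula; once this is in place the rest reduces to the geometric observation that $\De\tilde y$ and $\vartot{\De\tilde y}$ are supported off the flat intervals of $\phi$.
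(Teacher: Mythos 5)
Your proposal is correct and follows essentially the same route as the paper's proof: verify the variational inequality for $y\circ\phi$ via a change of variables along $\phi$, using the left inverse $\psi(s)=\min\phi^{-1}(\{s\})$ as test-function transport and the fact that $\vartot{\De (y\circ\phi)}$ vanishes on the flat intervals of $\phi$ where $\psi\circ\phi\neq\operatorname{Id}$. The only cosmetic differences are that the paper runs the substitution in the opposite direction and carries it out through the Radon--Nikodym density $v$ with $\De y=v\De V_y$, whereas you establish the pushforward identities for $\De(y\circ\phi)$ and its total variation directly and invoke uniqueness explicitly at the end.
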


We will prove Theorems \ref{T:BVnorm cont}, \ref{T:BVstrict cont}, and \ref{th:P r.i.} in Section \ref{S:proofs}.


\section{Proofs}\label{S:proofs}

Let us start by proving that $\Pl$ is rate independent.

\begin{proof}[Proof of Theorem \ref{th:P r.i.}]

Set $y := \Pl(u,z_0)$, and recall that $V_y(t) = \V(y,\clint{0,t})$ for every $t \in \clint{0,T}$. Hence $\vartot{\De y} = \De V_y$, and by the vectorial Radon-Nikodym theorem 
(\cite[Corollary VII.4.2]{Lan93}) there exists $v \in \L^1(\vartot{\De y};\H)$ such that 
$\De y = v \De V_y$. Let us fix $z \in \BV(\clint{0,T};\H)$ such that 
$z(\clint{0,T}) \subseteq \clint{0,T}$ and recall the following well-known formula holding for any 
measure $\mu : \borel(\clint{0,T}) \function \clsxint{0,\infty}$, $g \in \L^1(\mu;\H)$, and 
$A \in \borel(\clint{0,T})$:
\[
  \int_{\phi^{-1}(A)} g(\phi(t)) \de\mu(t) = \int_{A} g(\tau) \de(\phi_* \mu)(\tau),
\]
where $\phi_* \mu : \borel(\clint{0,T}) \function \clsxint{0,\infty}$ is the measure defined by 
$\phi_* \mu(B) := \mu(\phi^{-1}(B))$ for $B \in \borel(\clint{0,T})$ (this formula can be proved by approximating $g$ by a sequence of step functions and then taking the limit).
If $0 \le \alpha \le \beta \le T$ we have 
\[
\phi_*(\De V_y\circ \phi)(\clint{\alpha,\beta}) = 
(\De V_y \circ \phi)(\phi^{-1}(\clint{\alpha,\beta})) =\De V_y(\clint{\alpha,\beta}),
\]
hence 
\[
\phi_*(\De V_y\circ \phi) = \De V_y,
\] 
and for $0 \le a \le b \le T$ we find
\begin{align}
  \De\ \!(y\circ \phi)(\clint{a,b}) 
  & = y(\phi(b)) - y(\phi(a)) = \De y(\clint{\phi(a),\phi(b)})\notag \\
  & = \int_{\clint{\phi(a),\phi(b)}} v(\tau)\de \De V_y(\tau) = 
   \int_{\clint{\phi(a),\phi(b)}} v(\tau)\de \De\ \!(\phi_* V_y)(\tau) \notag \\
  & = \int_{\clint{a,b}} v(\phi(t))\de \De\ \!(V_y \circ \phi)(t) =
  (v \circ \phi)\De\ \!(V_y \circ \phi)(\clint{a,b}), \notag
\end{align}
so that
\[
  \De\ \!(y\circ \phi) = (v \circ \phi)\De\ \!(V_y \circ \phi), \qquad
  \vartot{\De\ \!(y\circ \phi)} = \norm{v \circ \phi}{} \De\ \!(V_y \circ \phi).
\]
If $\psi(\tau) := \inf \phi^{-1}(\tau)$, then $\psi$ is increasing and $\tau = \phi(\psi(\tau))$. Therefore, since $\De\ \!(V_y\circ \phi)=0$ on every interval where $\phi$ is constant, we find that for every 
$h \in \Czero(\re^2)$ we have 
\[
\int_{\clint{0,T}} h(z(t),\phi(t))\de\De\ \!(V_y\circ \phi)(t) =
\int_{\clint{0,T}} h(z(\psi(\phi(t)), \phi(t)) \de\De\ \!(V_y\circ \phi)(t).
\]
Hence
\begin{align}
  &\int_{\clint{0,T}} \duality{z(t)-u(\phi(t))+y(\phi(t))}{\de\De\ \!(y\circ \phi)(t)} \notag\\
  &=  \int_{\clint{0,T}} \duality{z(t)-u(\phi(t))+y(\phi(t))}{v(\phi(t))}\de\De\ \!(V_y\circ \phi)(t) \notag \\
  &=  \int_{\clint{0,T}} \duality{z(\psi(\phi(t)))-u(\phi(t))+y(\phi(t))}{v(\phi(t))}\de\De\ \!(V_y\circ \phi)(t) \notag \\
  &=  \int_{\clint{0,T}} \duality{z(\psi(\tau))-u(\tau)+y(\tau)}{v(\tau)}{\de\De V_y(\tau)} \notag \\
  & = \int_{\clint{0,T}} \duality{z(\psi(\tau))-u(\tau)+y(\tau)}{\de\De y(\tau)}, \label{r-i1}
\end{align}
and
\begin{align}
& \int_{\clint{0,T}} \norm{z(t)-u(\phi(t))+y(\phi(t))}{}^2 \de \vartot{\De\ \!(y\circ \phi)}(t) \notag \\
& = \int_{\clint{0,T}} \norm{z(t)-u(\phi(t))+y(\phi(t))}{}^2 v(\phi(t)) \de \De\ \!(V_y\circ \phi)(t) \notag \\
& = \int_{\clint{0,T}} \norm{z(\psi(\phi(t))-u(\phi(t))+y(\phi(t))}{}^2 \norm{v(\phi(t))}{} 
         \de \De\ \!(V_y\circ \phi)(t) \notag \\
& = \int_{\clint{0,T}} \norm{z(\psi(\tau))-u(\tau)+y(\tau)}{}^2 \norm{v(\tau)}{} \de \De V_y(\tau) \notag \\
& = \int_{\clint{0,T}} \norm{z(\psi(\tau))-u(\tau)+y(\tau)}{}^2  \de \vartot{\De y}(\tau) \label{r-i2}.
\end{align}
Since $y = \Pl(u,z_0)$  we have that the right hand  side of \eqref{r-i1} is less  or equal to the right hand side of \eqref{r-i2} times $\frac{1}{2r}$ and this implies that 
\begin{align}
& \int_{\clint{0,T}} \duality{z(t)-u(\phi(t))+y(\phi(t))}{\de\De\ \!(y\circ \phi)(t)} \notag \\
& \le
\frac{1}{2r}\int_{\clint{0,T}} \norm{z(t)-u(\phi(t))+y(\phi(t))}{}^2 \de \vartot{\De\ \!(y\circ \phi)}(t),
\end{align}
which is what we wanted to prove.
\end{proof}

In the next result, we prove a normality rule for the non-convex play operator, thereby  we generalize
to the non-convex case the result in \cite[Proposition 3.9]{Kre96}. The idea of the proof is analogous to the one of \cite[Proposition 3.9]{Kre96}.

\begin{Prop}\label{S and Q}
Assume that \eqref{H-prel}, \eqref{Z}, \eqref{T} hold, $u \in \Lip(\clint{0,T};\H)$, $z_0 \in \Z$, and that $y = \Pl(u,z_0)$. Let $x = \Stop(u, z_0) : \clint{0,T} \function \H$ and 
$w = \Q(u, z_0) : \clint{0,T} \function \H$ be defined by
\begin{align}
  x(t) := \Stop(u, z_0)(t) := u(t) - y(t), \qquad \text{$t \in \clint{0,T}$,} \label{def S}\\
  w(t) := \Q(u, z_0)(t) := y(t) - x(t), \qquad \text{$t \in \clint{0,T}$.} \label{def Q}
\end{align}
Then $w = \Q(u, z_0) \in \Lip(\clint{0,T};\H)$, $x = \Stop(u, z_0) \in \Lip(\clint{0,T};\H)$, $x(t) \in Z$ for every $t \in \clint{0,T}$, and 
\begin{equation}\label{y'.x'=0}
  \duality{y'(t)}{x'(t)} = 0 \qquad \text{for $\leb^1$-a.e. $t \in \clint{0,T}$,}
\end{equation}
and
\begin{equation}\label{|w'|=|u'|}
  \norm{w'(t)}{} = \norm{u'(t)}{} \qquad \text{for $\leb^1$-a.e. $t \in \clint{0,T}$.}
\end{equation}
\end{Prop}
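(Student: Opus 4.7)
The proof breaks into four pieces and closely follows the convex-case argument of \cite[Proposition 3.9]{Kre96}, with a small adjustment to absorb the extra prox-regularity term in \eqref{W1p v.i.}.

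First, since $u \in \Lip(\clint{0,T};\H) = \W^{1,\infty}(\clint{0,T};\H)$, the regularity statement of the previous proposition yields $y = \Pl(u,z_0) \in \W^{1,\infty}(\clint{0,T};\H) = \Lip(\clint{0,T};\H)$. Hence $x = u - y$ and $w = y - x = 2y - u$ are also Lipschitz, and the inclusion $x(t) \in \Z$ for every $t \in \clint{0,T}$ is precisely the constraint in Problem \ref{CBVplay}.

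For the orthogonality \eqref{y'.x'=0}, I would fix a set $N \subseteq \clint{0,T}$ of $\leb^1$-measure zero outside which $u'(t)$, $y'(t)$, $x'(t)$ all exist and the inequality \eqref{W1p v.i.} holds for every $z \in \Z$ simultaneously. For any $t \in \clint{0,T}\setminus N$ and any $s \in \clint{0,T}$, the admissible choice $z := x(s) \in \Z$ turns \eqref{W1p v.i.} into
\[
  \duality{x(s) - x(t)}{y'(t)} \le \frac{\norm{y'(t)}{}}{2r}\norm{x(s) - x(t)}{}^2.
\]
Setting $s = t+h$ with $h > 0$, dividing by $h$, and letting $h \to 0^+$ gives $\duality{x'(t)}{y'(t)} \le 0$, because the right-hand side equals $\frac{\norm{y'(t)}{}}{2r} \cdot h \cdot \bigl(\norm{x(t+h)-x(t)}{}/h\bigr)^2 \to 0$ thanks to the Lipschitz regularity of $x$. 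Choosing $s = t-h$ with $h > 0$ and dividing by $-h$ reverses the inequality in the limit and yields $\duality{x'(t)}{y'(t)} \ge 0$, which establishes \eqref{y'.x'=0}.

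The norm identity \eqref{|w'|=|u'|} is then immediate from this orthogonality: writing $u' = x' + y'$ and $w' = y' - x'$, one has
\[
  \norm{w'(t)}{}^2 = \norm{y'(t)}{}^2 - 2\duality{y'(t)}{x'(t)} + \norm{x'(t)}{}^2 = \norm{x'(t)}{}^2 + 2\duality{x'(t)}{y'(t)} + \norm{y'(t)}{}^2 = \norm{u'(t)}{}^2
\]
for $\leb^1$-a.e. $t \in \clint{0,T}$.

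The only genuine obstacle, as compared to the convex setting, is the presence of the quadratic penalty $\frac{\norm{y'(t)}{}}{2r}\norm{z - x(t)}{}^2$ on the right of \eqref{W1p v.i.}. This is exactly what is absorbed by the test $z = x(s)$: the Lipschitz bound $\norm{x(s)-x(t)}{} = O(|s-t|)$ makes the quadratic term of order $|s-t|^2$, hence one order better than the linear vanishing needed on the left after dividing by $s-t$.
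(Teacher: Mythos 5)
Your proposal is correct and follows essentially the same route as the paper: test the pointwise inequality \eqref{W1p v.i.} with $z = x(t\pm h)$, use the Lipschitz bound on $x$ to make the prox-regular quadratic term vanish after dividing by $h$, deduce $\duality{y'(t)}{x'(t)}=0$ a.e., and then expand $\norm{u'}{}^2$ and $\norm{w'}{}^2$. Your explicit justification of the Lipschitz regularity of $y$ via the preceding proposition and of the a.e.\ quantifier handling are minor details the paper leaves implicit, not a different method.
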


\begin{proof}
Let $t \in \clint{0,T}$ be a point where $x$ is differentiable. 
Taking $z(t) = x(t+h) \in \Z$ for every $h \in \re$ sufficiently small, we have
\[
  \frac{1}{h}\duality{y'(t)}{x(t) - x(t+h)} \ge -\frac{\norm{y'(t)}{}}{2rh}\norm{x(t) - x(t+h)}{}^2
\]
therefore letting $h \to 0$ we get
\begin{equation}\label{y'.x'<0}
  \duality{y'(t)}{-x'(t)} \ge 0.
\end{equation}
Taking $z(t) = x(t-h)$ we also have
\[
  \frac{1}{h}\duality{y'(t)}{x(t) - x(t-h)} \ge -\frac{\norm{y'(t)}{}}{2rh}\norm{x(t) - x(t-h)}{}^2
\]
therefore letting $h \to 0$ we get
\[
  \duality{y'(t)}{x'(t)} \ge 0,
\]
which together with \eqref{y'.x'<0} yields \eqref{y'.x'=0}. This formula implies that 
\begin{equation}\label{|w'|}
  \norm{w'(t)}{}^2 = \norm{y'(t) - x'(t)}{}^2 = \duality{y'(t) - x'(t)}{y'(t) - x'(t)}
  = \norm{y'(t)}{}^2 + \norm{x'(t)}{}^2,
\end{equation}
and
\begin{equation}\label{|u'|}
  \norm{u'(t)}{}^2 = \norm{y'(t) + x'(t)}{}^2 = \duality{y'(t) + x'(t)}{y'(t) + x'(t)}
  = \norm{y'(t)}{}^2 + \norm{x'(t)}{}^2,
\end{equation}
therefore \eqref{|w'|=|u'|} follows.
\end{proof}

Let us observe that, in the previous proposition, the geometrical meaning of \eqref{|w'|}-\eqref{|u'|}, is that $w'(t)$ and $u'(t)$ are the diagonals of the rectangle with sides $x'(t)$ and $y'(t)$, so that we have \eqref{|w'|=|u'|}.

Now we prove the continuity of the play operator with respect to the $\BV$-norm on the space
$\Lip(\clint{0,T};\H)$. Our proof is based on the normality rule of Proposition \ref{S and Q} and on a very simple application of a standard weak convergence-argument in $\L^2(\clint{0,T};\H)$.

\begin{Thm}
Assume that \eqref{H-prel}, \eqref{Z}, \eqref{T} hold. The non-convex play operator restricted to $\Z \times \Lip(\clint{0,T};\H)$, i.e. $\Pl : \Z \times \Lip(\clint{0,T};\H) \function \Lip(\clint{0,T};\H)$, is continuous with respect to the $\BV$-norm. More precisely let $z_0 \in \Z$, 
$u \in \Lip(\clint{0,T};\H)$, $z_{0,n} \in \Z$, $u_n \in \Lip(\clint{0,T};\H)$ for every 
$n \in \en$, and let $y := \Pl(z_0, u)$ and $y_n := \Pl(z_{0,n}, u_n).$ 

\noindent If $\norm{z- z_n}{} \to 0$ and
\[
  \norm{u - u_n}{\infty} + \norm{u' - u_n'}{\L^1(\clint{0,T};\H)} \to 0 
  \qquad \text{as $n \to \infty$},
\]
then
\[
  \norm{y - y_n}{\infty} + \norm{y' - y_n'}{\L^1(\clint{0,T};\H)} \to 0 
  \qquad \text{as $n \to \infty$}.
\]
\end{Thm}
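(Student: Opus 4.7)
I would prove the theorem in three steps corresponding to uniform convergence of the solutions, weak $L^1$-convergence of their derivatives, and strong $L^1$-convergence via a Hilbertian argument localized on sublevel sets.

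\textbf{Step 1 (uniform convergence).} Set $x:=u-y$ and $x_n:=u_n-y_n$, both $\Z$-valued. Test the variational inequality \eqref{W1p v.i.} for $y$ against $z=x_n(t)$ and the one for $y_n$ against $z=x(t)$, and add. Since $x-x_n=(u-u_n)-(y-y_n)$, this yields pointwise
\[
  \tfrac12\tfrac{d}{dt}\|y-y_n\|^2+\langle u_n-u,\,y'-y_n'\rangle \le \tfrac{\|y'\|+\|y_n'\|}{2r}\|x-x_n\|^2.
\]
Integrate on $[0,t]$, integrate by parts the $u_n-u$ term, and bound $\|x-x_n\|^2\le 2\|u-u_n\|_\infty^2+2\|y-y_n\|^2$. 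The normality identity $\|u'\|^2=\|y'\|^2+\|x'\|^2$ from Proposition \ref{S and Q} gives $\|y'\|+\|y_n'\|\le\|u'\|+\|u_n'\|$ pointwise, so the Gr\"onwall weight is uniformly $L^1$-bounded. A routine Gr\"onwall argument (absorbing boundary terms of the form $\|u-u_n\|_\infty\cdot\|y-y_n\|_\infty$ via AM--GM) then yields $\|y-y_n\|_\infty\to 0$.

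\textbf{Step 2 (weak $L^1$-convergence of derivatives).} From $\|y_n'\|,\|x_n'\|\le\|u_n'\|$ a.e.\ (again Proposition \ref{S and Q}) and the equi-integrability of $\{u_n'\}$ in $L^1$ (consequence of $L^1$-convergence), the families $\{y_n'\}$ and $\{x_n'\}$ are equi-integrable, hence relatively weakly compact in $L^1([0,T];\H)$ by Dunford--Pettis. Using Step 1, any weak cluster point $\eta$ of $\{y_n'\}$ satisfies $\int_0^s\eta = y(s)-y(0)$ for every $s$, forcing $\eta=y'$; hence the full sequences converge weakly in $L^1$: $y_n'\rightharpoonup y'$ and $x_n'\rightharpoonup x'$.

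\textbf{Step 3 (strong $L^1$-convergence via truncation).} By Vitali, extract a subsequence along which $u_n'\to u'$ a.e.\ and $\|u_n'\|\le h$ a.e.\ for some $h\in L^1$; passing to the limit, $\|u'\|\le h$ as well. For $\lambda>0$ set $E_\lambda:=\{h\le\lambda\}$. On $E_\lambda$, $\{y_n'\}$ and $\{x_n'\}$ are bounded in $L^\infty$ by $\lambda$, hence in $L^2(E_\lambda;\H)$; the weak $L^1$-convergence from Step 2 therefore upgrades to weak $L^2(E_\lambda)$-convergence. Dominated convergence gives $u_n'\to u'$ strongly in $L^2(E_\lambda;\H)$, and integrating the pointwise identity $\|u_n'\|^2=\|y_n'\|^2+\|x_n'\|^2$ over $E_\lambda$ gives
\[
  \|y_n'\|_{L^2(E_\lambda)}^2+\|x_n'\|_{L^2(E_\lambda)}^2 \longrightarrow \|u'\|_{L^2(E_\lambda)}^2 = \|y'\|_{L^2(E_\lambda)}^2+\|x'\|_{L^2(E_\lambda)}^2.
\]
Weak lower-semicontinuity applied to $\{x_n'\}$ gives $\limsup\|y_n'\|_{L^2(E_\lambda)}^2\le\|y'\|_{L^2(E_\lambda)}^2$, and the matching liminf applied to $\{y_n'\}$ gives norm convergence $\|y_n'\|_{L^2(E_\lambda)}\to\|y'\|_{L^2(E_\lambda)}$. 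Since $L^2(E_\lambda;\H)$ is a Hilbert space, weak plus norm convergence is strong convergence (Radon--Riesz), so $y_n'\to y'$ in $L^2(E_\lambda;\H)$ and, by finiteness of the measure, in $L^1(E_\lambda;\H)$. The tail is controlled by $\int_{[0,T]\setminus E_\lambda}(\|y_n'\|+\|y'\|)\le 2\int_{\{h>\lambda\}}h$, which tends to $0$ as $\lambda\to\infty$ uniformly in $n$. A standard sub-subsequence argument then promotes strong $L^1$-convergence to the full sequence.

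The principal obstacle is Step 3: the normality identity naturally invites an $L^2$-Hilbertian (Radon--Riesz) argument, but the hypothesis only provides $L^1$-convergence of $u_n'$. The truncation to the sublevel set $E_\lambda$ is exactly the device that resolves this, supplying local uniform $L^2$-bounds where the Hilbertian machinery applies, while equi-integrability controls the tail.
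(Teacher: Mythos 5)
Your proposal is correct, but it takes a genuinely different route from the paper's. The paper quotes the uniform convergence $y_n\to y$ from the proof of Theorem 5.5 in \cite{KreMonRec22a} (you reprove it by testing \eqref{W1p v.i.} symmetrically with $z=x_n(t)$, $z=x(t)$ and running a Gr\"onwall estimate with weight $(\|y'\|+\|y_n'\|)/r$, which is legitimate and self-contained, though you should also record the initial-data term $\|y(0)-y_n(0)\|\le\|u(0)-u_n(0)\|+\|z_0-z_{0,n}\|$), and then works with $w=\Q(u,z_0)=2\Pl(u,z_0)-u$: by the normality rule $\|w_n'\|=\|u_n'\|$ a.e.\ it asserts $\|w_n'\|_{\L^2}\to\|w'\|_{\L^2}$, extracts $w_n'\rightharpoonup w'$ in $\L^2(\clint{0,T};\H)$ from boundedness plus uniform convergence, and concludes by the Radon--Riesz property of $\L^2$ --- exactly the Hilbertian mechanism at the heart of your Step 3. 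The essential difference is where the $\L^2$ structure is invoked: the paper's global step requires $\|u_n'\|_{\L^2(\clint{0,T};\H)}\to\|u'\|_{\L^2(\clint{0,T};\H)}$, which does not follow from the stated hypothesis $\|u_n'-u'\|_{\L^1}\to0$ for general Lipschitz inputs (scalar example: $u_n'=n\indicator_{\clint{0,1/n^2}}$), although it is automatic in the constant-speed setting \eqref{|f'|=1} in which the theorem is subsequently applied; your truncation to the sublevel sets $E_\lambda$ of a Vitali-type $\L^1$ dominant, with the tail controlled by equi-integrability and a sub-subsequence argument, supplies exactly the missing local $\L^2$ control, so your argument establishes the theorem under the bare $\L^1$ hypothesis at the price of extra bookkeeping. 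One technical remark: the Dunford--Pettis theorem you invoke in Step 2 is the vector-valued one in $\L^1(\clint{0,T};\H)$; it does hold here because $\H$ is reflexive (and deserves a citation, cf.\ \cite{DieUhl77}), but it can also be bypassed entirely by extracting weak limits of $y_n'$ directly in the Hilbert spaces $\L^2(E_\lambda;\H)$, $\lambda\in\en$, and identifying them through $\int_0^s\langle v,y_n'(t)\rangle\de t=\duality{v}{y_n(s)-y_n(0)}$ together with the uniform bound $\int_{\clint{0,T}\setminus E_\lambda}\|y_n'\|\de t\le\int_{\{h>\lambda\}}h\de t$.
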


\begin{proof}
For every $n \in \en$ let us set $w := \Q(u,z_0)$ and $w_n := \Q(u_n,z_0)$ according to formulas
\eqref{def S}--\eqref{def Q}. From Proposition \ref{S and Q} we find that
\begin{equation}
  \norm{w_n'}{\L^2(\clint{0,T};\H)} = \norm{u_n'}{\L^2(\clint{0,T};\H)} \to 
  \norm{u'}{\L^2(\clint{0,T};\H)} = \norm{w'}{\L^2(\clint{0,T};\H)} \qquad \text{as $n \to \infty$.}
\end{equation}
In particular $\{w_n'\}$ is bounded in $\L^2(\clint{0,T};\H)$, therefore there exists 
$\eta \in \L^2(\clint{0,T};\H)$ such that, at least for a subsequence, 
\begin{equation}
  w_n' \convergedeb \eta \qquad \text{in $\L^2(\clint{0,T};\H)$}.
\end{equation}
But $y_n \to y$ uniformly on $\clint{0,T}$ (see the proof of Theorem 5.5 in \cite{KreMonRec22a}), hence $w_n \to w$ uniformly on $\clint{0,T}$, therefore
\begin{equation}
  w_n' \convergedeb  w' \qquad \text{in $\L^2(\clint{0,T};\H)$}.
\end{equation}
Therefore since $\L^2(\clint{0,T};\H)$ is a Hilbert space we infer that
\begin{equation}
  w_n' \to  w' \qquad \text{in $\L^2(\clint{0,T};\H)$},
\end{equation}
which implies that
\begin{equation}
  w_n' \to  w' \qquad \text{in $\L^1(\clint{0,T};\H)$},
\end{equation}
so that $y_n' \to y'$ in $\L^1(\clint{0,T};\H)$ and we are done.
\end{proof}

Our proof of the $\BV$-norm continuity of $\Pl$ on $\Czero(\clint{0,T};\H) \cap \BV(\clint{0,T};\H)$
will essentially consists on reducing the problem to the Lipschitz continuous case by means of a reparametrization by the arc length. We need the following two auxiliary results. The first is the following:

\begin{Prop}\label{P:ftilde}
Assume that \eqref{H-prel} holds.
For every $f \in \Czero(\clint{0,T};\H) \cap \BV(\clint{0,T};\H)$, let $\ell_f : \clint{0,T} \function \re$ be defined by 
\begin{equation}\label{ell_f}
  \ell_f(t) = 
  \begin{cases}
    \dfrac{T}{\V(f, \clint{0,T})} \V(f,\clint{0,t}) & \text{if $\V(f, \clint{0,T}) \neq 0$}, \\
    \ \\
    0 & \text{if $\V(f, \clint{0,T}) = 0$},
  \end{cases}
\end{equation}
which we call \emph{normalized arc-length of $f$}. Then there exists 
$\ftilde \in \Lip(\clint{0,T};\H),$ the \emph{re\-pa\-ra\-me\-tri\-za\-tion of $f$ by the normalized arc-length}, such that 
\begin{equation}\label{f=ftilde(l)}
  f = \ftilde \circ \ell_f.
\end{equation}
Moreover
there exists a $\leb^1$-representative $\ftilde'$ of the distributional derivative of $\ftilde$ such that
\begin{equation}\label{|f'|=1}
  \norm{\ftilde'(\sigma)}{} = \frac{V(f,\clint{0,T})}{T}, \qquad \forall \sigma \in \clint{0,T}.
\end{equation}
\end{Prop}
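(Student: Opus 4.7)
\smallskip

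\noindent\textbf{Proof proposal.} The plan is to treat the two cases in \eqref{ell_f} separately, and in the non-trivial case to construct $\ftilde$ as a well-defined ``pass-to-quotient'' of $f$ through $\ell_f$.

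First I would dispose of the degenerate case $\V(f,\clint{0,T}) = 0$: here $f$ is constant (since $\norm{f(t)-f(s)}{} \le \V(f,\clint{0,T})$), so $\ell_f \equiv 0$ and any constant $\ftilde \equiv f(0)$ satisfies \eqref{f=ftilde(l)}, with $\ftilde'$ redefined to be any fixed vector of norm $0 = \V(f,\clint{0,T})/T$. So assume $\V(f,\clint{0,T}) > 0$. Then $\ell_f$ is non-decreasing with $\ell_f(0)=0$, $\ell_f(T)=T$. The crucial preliminary fact is the continuity of $t \mapsto \V(f,\clint{0,t})$: a non-decreasing function has only jump discontinuities, and at each such point $t_0$ one has $\V(f,\clint{0,t_0+})-\V(f,\clint{0,t_0-}) \ge \norm{f(t_0+)-f(t_0)}{} + \norm{f(t_0)-f(t_0-)}{}$, which vanishes because $f$ is continuous. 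Hence $\ell_f \in \Czero(\clint{0,T};\re)$ and by the intermediate value theorem $\ell_f(\clint{0,T}) = \clint{0,T}$.

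Next I would define $\ftilde$ on $\clint{0,T}$ by setting $\ftilde(\sigma) := f(t)$ for any $t$ with $\ell_f(t)=\sigma$. This is well posed: if $\ell_f(s)=\ell_f(t)$ with $s<t$, then
\begin{equation}
  \V(f,\clint{s,t}) = \V(f,\clint{0,t}) - \V(f,\clint{0,s}) = 0, \notag
\end{equation}
so $f$ is constant on $\clint{s,t}$, in particular $f(s)=f(t)$. With $\ftilde$ thus defined, \eqref{f=ftilde(l)} holds by construction. For the Lipschitz estimate, given $0 \le \sigma_1 < \sigma_2 \le T$ pick $t_1 \le t_2$ with $\ell_f(t_i)=\sigma_i$; then
\begin{equation}
  \norm{\ftilde(\sigma_2)-\ftilde(\sigma_1)}{} = \norm{f(t_2)-f(t_1)}{} \le \V(f,\clint{t_1,t_2}) = \frac{\V(f,\clint{0,T})}{T}(\sigma_2-\sigma_1), \notag
\end{equation}
so $\ftilde \in \Lip(\clint{0,T};\H)$ with $\Lipcost(\ftilde) \le \V(f,\clint{0,T})/T$.

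Finally, to obtain \eqref{|f'|=1} I would show the reverse inequality $\V(\ftilde,\clint{0,T}) \ge \V(f,\clint{0,T})$ and use the Lipschitz bound. For any partition $0 = t_0 < \cdots < t_m = T$, the points $\sigma_i := \ell_f(t_i)$ form a (weakly) monotone sequence in $\clint{0,T}$, and since $\ftilde(\sigma_i)=f(t_i)$,
\begin{equation}
  \sum_{i=1}^m \norm{f(t_i)-f(t_{i-1})}{} = \sum_{i=1}^m \norm{\ftilde(\sigma_i)-\ftilde(\sigma_{i-1})}{} \le \V(\ftilde,\clint{0,T}), \notag
\end{equation}
(duplicated consecutive $\sigma_i$'s contribute zero, as $f(t_i)=f(t_{i-1})$ there). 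Hence $\V(\ftilde,\clint{0,T}) = \V(f,\clint{0,T})$. Since $\ftilde \in \W^{1,\infty}(\clint{0,T};\H)$, the distributional derivative $\ftilde'$ exists $\leb^1$-a.e.\ with $\norm{\ftilde'(\sigma)}{} \le \V(f,\clint{0,T})/T$ and $\int_0^T\norm{\ftilde'(\sigma)}{}\de\sigma = \V(\ftilde,\clint{0,T}) = \V(f,\clint{0,T})$, forcing $\norm{\ftilde'(\sigma)}{} = \V(f,\clint{0,T})/T$ almost everywhere. I would then redefine $\ftilde'$ on the exceptional $\leb^1$-null set by assigning, at each such point, any fixed vector of norm $\V(f,\clint{0,T})/T$; this produces a representative of the distributional derivative satisfying \eqref{|f'|=1} pointwise. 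The only subtle point in the whole argument is the well-posedness of $\ftilde$ on the plateaus of $\ell_f$, which is where the combined continuity and $\BV$ hypotheses on $f$ are essential.
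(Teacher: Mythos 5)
Your argument is correct in substance, but it is much more self-contained than the paper's proof, which disposes of the proposition in a few lines: existence of $\ftilde\in\Lip(\clint{0,T};\H)$ with $f=\ftilde\circ\ell_f$ is quoted from \cite[Proposition 3.1]{Rec08}, the fact that any $\leb^1$-representative $g$ of $\ftilde'$ satisfies $\norm{g(\sigma)}{}=\V(f,\clint{0,T})/T$ on a full-measure set is quoted from \cite[Lemma 4.3]{Rec11}, and the only step carried out explicitly is the redefinition of the representative on the exceptional null set (using a unit vector $e_0$), which you also perform. Your construction --- passing to the quotient along the plateaus of $\ell_f$, the Lipschitz bound via $\norm{f(t_2)-f(t_1)}{}\le\V(f,\clint{t_1,t_2})=\frac{\V(f,\clint{0,T})}{T}(\sigma_2-\sigma_1)$, and the identity $\V(\ftilde,\clint{0,T})=\V(f,\clint{0,T})$ combined with $\int_0^T\norm{\ftilde'(\sigma)}{}\de\sigma=\V(\ftilde,\clint{0,T})$ to force $\norm{\ftilde'}{}=\V(f,\clint{0,T})/T$ a.e. --- is precisely the standard argument underlying the cited references, so the mathematical content is the same; what your version buys is independence from those external lemmas, at the cost of length. (Your last step does implicitly use $\V(\ftilde,\clint{0,T})=\int_0^T\norm{\ftilde'(\sigma)}{}\de\sigma$ for Lipschitz $\H$-valued maps; this is recorded at the end of Section 2 of the paper, via \cite[Appendix]{Bre73}, and rests on the Radon--Nikod\'ym property of $\H$.)

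One slip to fix: to prove continuity of $t\mapsto\V(f,\clint{0,t})$ you write $\V(f,\clint{0,t_0+})-\V(f,\clint{0,t_0-})\ \ge\ \norm{f(t_0+)-f(t_0)}{}+\norm{f(t_0)-f(t_0-)}{}$ and then conclude that the left-hand side vanishes because the right-hand side does; with ``$\ge$'' this inference is invalid. What you need is the (standard and true) \emph{equality}: the jump of the variation function equals the sum of the one-sided jump norms of $f$, and the substantive half is the reverse inequality ``$\le$'', obtained by splitting an almost-optimal partition of $\clint{s,t_0}$ at its last node and letting $s\uparrow t_0$ (and symmetrically from the right). Once that direction is supplied, the continuity and surjectivity of $\ell_f$, and with them the rest of your proof, go through.
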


\begin{proof}
The existence of a function $\ftilde \in \Lip(\clint{0,T};\H)$ satisfying \eqref{f=ftilde(l)} is easy to prove (see, e.g., \cite[Proposition 3.1]{Rec08}). Moreover we know from \cite[Lemma 4.3]{Rec11} that 
if $g$ is a $\leb^1$-representative of the distributional derivative, then 
$\norm{g(\sigma)}{} = V(f,\clint{0,T})/T$ for every $\sigma \in F$, for some $F \subseteq \clint{0,T}$
with 
full measure in $\clint{0,T}$.
Thus \eqref{|f'|=1} follows if we define the following Lebesgue representative 
of the derivative of $\ftilde$:
\[
  \ftilde'(\sigma) :=
  \begin{cases}
    g(\sigma) & \text{if $\sigma \in F$}, \\
    \ \\
    \dfrac{(V(f,\clint{0,T})}{T} e_0& \text{if $\sigma \not\in F$,}
  \end{cases}
\]
where $e_0 \in \H$ is chosen so that $\norm{e_0}{} = 1$.
\end{proof}

Then, as for the Lipschitz case, we need to introduce the operator $\Q$ defined by $\Q(v) = 2\Pl(v) - v$ for $v \in \Czero(\clint{0,T};\H) \cap \BV(\clint{0,T};\H)$.

\begin{Lem}\label{L:Q(v)}
Assume that $v \in\Czero(\clint{0,T};\H) \cap  \BV(\clint{0,T};\H)$, $z_0 \in \Z$, and let 
$\Q : \Czero(\clint{0,T};\H) \cap \BV(\clint{0,T};\H) \times \Z \function 
\Czero(\clint{0,T};\H) \cap \BV(\clint{0,T};\H)$ be defined by
\begin{equation}\label{Q in BV}
  \Q(v,z_0) := 2\Pl(v,z_0) - v, \qquad v \in \Czero(\clint{0,T};\H) \cap \BV(\clint{0,T};\H).
\end{equation}
Then $\Q$ is rate independent, i.e. 
\begin{equation}\label{Q r.i.}
  \Q(v \circ \phi ,z_0) = \Q(v,z_0) \circ \phi \qquad
   \forall v \in \Czero(\clint{0,T};\H) \cap \BV(\clint{0,T};\H)
\end{equation}
for every continuous function $\phi : \clint{0,T} \function \clint{0,T}$ such that 
$(\phi(t) - \phi(s))(t-s) \ge 0$ and $\phi(\clint{0,T}) = \clint{0,T}$. Moreover if $\ell_v$ is the arc-length
defined in \eqref{ell_f}, then
\begin{equation}\label{DQ = Q'Dl}
  \De \Q(v, z_0) = ((\Q(\vtilde, z_0))'\circ \ell_v)\De\ell_v,
\end{equation}
i.e.
\begin{equation}\label{DQ = Q'Dl-2}
  \De\Q(v, z_0)(B) = \int_B (\Q(\vtilde, z_0))'(\ell_v(t)) \de \De\ell_v(t), \qquad
  \forall B \in \borel(\clint{0,T}),
\end{equation}
where formulas \eqref{DQ = Q'Dl}--\eqref{DQ = Q'Dl-2} hold with any $\leb^1$-representative  
$(\Q(\vtilde, z_0))'$ of the distributional derivative of $\Q(\vtilde, z_0)$. Finally we can take such an
$\leb^1$-representative so that
\begin{equation}\label{|Q'|=1}
  \norm{(\Q(\vtilde, z_0))'(\sigma)}{} = \frac{\V(v,\clint{0,T})}{T},\qquad \forall \sigma \in \clint{0,T}.
\end{equation}
\end{Lem}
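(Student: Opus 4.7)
The plan is to obtain the rate independence \eqref{Q r.i.} essentially for free from Theorem \ref{th:P r.i.}, since
\[
\Q(v\circ\phi, z_0) = 2\Pl(v\circ\phi, z_0) - v\circ\phi = 2\Pl(v,z_0)\circ\phi - v\circ\phi = \Q(v,z_0)\circ\phi;
\]
nothing beyond rate independence of $\Pl$ is needed here.

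For the measure formulas \eqref{DQ = Q'Dl}--\eqref{DQ = Q'Dl-2}, I would first dispose of the trivial case $\V(v,\clint{0,T}) = 0$ (in which $v$ is constant, both sides vanish, and there is nothing to prove), and then invoke Proposition \ref{P:ftilde} to write $v = \vtilde \circ \ell_v$ with $\vtilde \in \Lip(\clint{0,T};\H)$. Since $\ell_v$ is continuous, nondecreasing, and surjective from $\clint{0,T}$ onto $\clint{0,T}$, the rate independence just proved yields $\Q(v, z_0) = \Q(\vtilde,z_0) \circ \ell_v$; setting $w := \Q(\vtilde, z_0) \in \Lip(\clint{0,T};\H) \subseteq \W^{1,\infty}(\clint{0,T};\H)$, for any $\clint{a,b} \subseteq \clint{0,T}$ one has
\[
\De\Q(v,z_0)(\clint{a,b}) = w(\ell_v(b)) - w(\ell_v(a)) = \int_{\ell_v(a)}^{\ell_v(b)} w'(\sigma)\de\sigma
\]
by continuity of $w\circ\ell_v$ and the fundamental theorem of calculus for Lipschitz functions. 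To convert the last integral into $\int_{\clint{a,b}} w'(\ell_v(t))\de\De\ell_v(t)$, I would reuse the push-forward computation from the proof of Theorem \ref{th:P r.i.}: applied with $\phi = \ell_v$ and with the identity function in place of $V_y$, it gives $(\ell_v)_*\De\ell_v = \leb^1$ on $\clint{0,T}$, whence the change-of-variables formula already recalled there yields $\int_{\ell_v(a)}^{\ell_v(b)} w'(\sigma)\de\sigma = \int_{\ell_v^{-1}(\clint{\ell_v(a),\ell_v(b)})} w'(\ell_v(t))\,\de\De\ell_v(t)$; since $\De\ell_v$ gives no mass to the (possibly larger) plateaus of $\ell_v$, the domain of integration reduces to $\clint{a,b}$. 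The resulting equality on intervals characterises $\De\Q(v,z_0)$ on all Borel sets via Theorem \ref{existence of Stietjes measure}, giving \eqref{DQ = Q'Dl-2}.

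For the pointwise normalisation \eqref{|Q'|=1}, I would apply Proposition \ref{S and Q} to the Lipschitz input $\vtilde$: it yields $\norm{w'(\sigma)}{} = \norm{\vtilde'(\sigma)}{}$ for $\leb^1$-a.e.\ $\sigma \in \clint{0,T}$. Combined with the identity $\norm{\vtilde'(\sigma)}{} = \V(v,\clint{0,T})/T$ supplied by the distinguished representative in Proposition \ref{P:ftilde}, and after redefining $w'$ on the corresponding $\leb^1$-null exceptional set exactly as done in the proof of that proposition, one obtains an $\leb^1$-representative of the derivative of $\Q(\vtilde,z_0)$ satisfying \eqref{|Q'|=1} identically on $\clint{0,T}$.

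The main obstacle is the chain-rule identity in the second step: $\ell_v$ need not be absolutely continuous (its distributional derivative may carry a large singular part of Cantor-staircase type), so one cannot simply substitute $\de\ell_v = \ell_v'\,\de\sigma$. The workaround is the measure-theoretic change of variables based on $(\ell_v)_*\De\ell_v = \leb^1$, combined with the observation that $\De\ell_v$ is concentrated outside the plateaus of $\ell_v$; both ingredients already appear in the proof of Theorem \ref{th:P r.i.} and transfer verbatim to the present setting.
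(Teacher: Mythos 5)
Your argument is correct, and it diverges from the paper's proof only in the middle step. The rate independence of $\Q$ and the normalization \eqref{|Q'|=1} (via \eqref{|w'|=|u'|} of Proposition \ref{S and Q} applied to $\vtilde$, \eqref{|f'|=1} of Proposition \ref{P:ftilde}, and a redefinition on a null set) are exactly as in the paper. For \eqref{DQ = Q'Dl}--\eqref{DQ = Q'Dl-2}, however, the paper simply applies rate independence with $\phi=\ell_v$ to get $\Q(v,z_0)=\Q(\vtilde,z_0)\circ\ell_v$ and then cites \cite[Theorem A.7]{Rec11}, which is precisely the chain-rule statement that $(\Q(\vtilde,z_0))'\circ\ell_v$ is a density of $\De\,\Q(v,z_0)$ with respect to $\De\ell_v$; you instead reprove this chain rule from scratch through the identity $(\ell_v)_*\De\ell_v=\leb^1$ (which indeed follows from continuity, monotonicity and surjectivity of $\ell_v$ in the nondegenerate case), the change-of-variables formula, and the observation that $\De\ell_v$ charges no plateau of $\ell_v$, then passing from intervals to Borel sets via Theorem \ref{existence of Stietjes measure}. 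Your route is more self-contained and correctly identifies the real obstacle (failure of absolute continuity of $\ell_v$), at the price of re-deriving a known result; the citation route is shorter and also disposes of two technicalities you leave implicit, namely that one should fix a Borel representative of $(\Q(\vtilde,z_0))'$ so that the composition with $\ell_v$ is measurable, and that the formulas hold for \emph{any} $\leb^1$-representative --- the latter in fact drops out of your own identity, since $(\ell_v)_*\De\ell_v=\leb^1$ shows that $\leb^1$-null sets pull back to $\De\ell_v$-null sets, so it would be worth one explicit sentence.
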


\begin{proof}
From Theorem \ref{th:P r.i.} it follows that
\[
  \Q(v \circ \phi,z_0) = 2\Pl(v\circ \phi,z_0) - v \circ \phi = 
  2\Pl(v,z_0) \circ \phi - v \circ \phi = \Q(v, z_0) \circ \phi,
\]
which is \eqref{Q r.i.}. Moreover, since $\vtilde$ is Lipschitz continuous, we have that 
$\Q(\vtilde, z_0) \in \Lip(\clint{0,T};\H)$, therefore by \cite[Theorem A.7]{Rec11} we infer that, if
$\Q(\vtilde, z_0)'$ is any $\leb^1$-representative of the distributional derivative of $\Q(\vtilde, z_0)$, then the bounded measurable function $\Q(\vtilde, z_0)'\circ \ell_v$ is a density of $\Q(v, z_0)$ with respect to the measure $\De\ell_v$, i.e. \eqref{DQ = Q'Dl} holds. Finally \eqref{|Q'|=1} follows from
\eqref{|w'|=|u'|} of Proposition \ref{S and Q} and from \eqref{|f'|=1} of Proposition \ref{P:ftilde}.
\end{proof}

Now we can prove our first main result.

\begin{proof}[Proof of Theorem \ref{T:BVnorm cont}]
Let us consider $u \in \BV(\clint{0,T};\H)$ and $u_n \in \BV(\clint{0,T};\H)$ for every $n \in \en$, and  assume that $\norm{u_n - u}{\BV(\clint{0,T};\H)} \to 0$ as $n \to \infty$. Then let $\ell := \ell_u$ and $\ell_n := \ell_{u_n}$ be the normalized arc-length functions defined in \eqref{ell_f}, so that we have
\[
  u = \utilde \circ \ell, \quad u_n = \utilde_n \circ \ell_n \qquad \forall n \in \en
\]
Let us also set
\begin{equation}\label{def w wn}
   w := \Q(u,z_0), \quad w_n := \Q(u_n, z_{0,n}), \qquad n \in \en,
\end{equation}
where the operator $\Q$ is defined in Lemma \ref{L:Q(v)}. By the proof of 
\cite[Theorem 5.5]{KreMonRec22a} we have that $\Pl(u_n,z_{0n}) \to \Pl(u_n,z_{0})$ uniformly
on $\clint{0,T}$, because $\norm{u_n -u}{\infty} \to 0$ as $n \to \infty$. Therefore from formula
\eqref{Q in BV} it follows that 
\begin{equation}\label{wn->w unif}
  w_n \to w \qquad \text{uniformly on $\clint{0,T}$}.
\end{equation}
Let us observe that $\Q(\utilde, z_0)$ and $\Q(\utilde_n, z_0)$ are Lipschitz continuous for every 
$n \in \en$ and let us define the bounded measurable functions $h: \clint{0,T} \function \H$ and $h_n: \clint{0,T} \function \H$ by
\begin{equation}
  h(t) := (\Q(\utilde, z_0))'(\ell_u(t)), \quad h_n(t) := (\Q(\utilde_n, z_{0n}))'(\ell_n(t)), 
  \qquad t \in \clint{0,T},
\end{equation}
where, by Lemma \ref{L:Q(v)}, formula \eqref{|Q'|=1}, we have that the $\leb^1$-representative 
of the distributional derivative of $\Q(\utilde, z_0)$ and $\Q(\utilde_n, z_0)$ can be chosen in such a way that
\[
 \norm{(\Q(\utilde, z_0))'(\sigma)}{} = \frac{\V(u,\clint{0,T})}{T}, \quad
\norm{(\Q(\utilde_n, z_0))'(\sigma)}{} = \frac{\V(u_n,\clint{0,T})}{T}, \qquad \forall\sigma \in \clint{0,T}.
\]
Therefore we have that
\begin{equation}\label{|gn(t)|, |g(t)|}
  \norm{h(t)}{} = \frac{\V(u,\clint{0,T})}{T}, \quad 
  \norm{h_n(t)}{} = \frac{\V(u_n,\clint{0,T})}{T},
  \qquad \forall t \in \clint{0,T}, \ \forall n \in \en.
\end{equation}
Since $u_n \to u$ in $\BV(\clint{0,T};\H)$, from the inequality 
\begin{equation}\label{V(f)-V(g)}
 |\V(u,\clint{a,b}) - \V(u_n,\clint{a,b})| \le \V(u-u_n,\clint{a,b}),
\end{equation}
holding for $0 \le a \le b \le T$, we infer that 
$\V(u_n,\clint{0,T}) \to \V(u,\clint{0,T})$ as $n \to \infty$, 
hence we have that the sequence 
$\{\V(u_n,\clint{0,T})\}$ is bounded. Therefore from \eqref{|gn(t)|, |g(t)|} we infer that there exists $C > 0$ such that
\begin{equation}\label{|g_n|infty bdd}
  \sup\{\norm{h_n(t)}{}\ :\ t \in \clint{0,T}\} \le C \qquad 
  \ \forall n \in \en
\end{equation}
and
\begin{equation}
  \lim_{n \to \infty} \norm{h_n(t)}{} = 
  \lim_{n \to \infty} \frac{\V(u_n,\clint{0,T})}{T} =
  \frac{\V(u,\clint{0,T})}{T} = \norm{h(t)}{}, \qquad \forall t \in \clint{0,T}.
\end{equation}
It follows that
\begin{align}
 \lim_{n \to \infty} \int_{\clint{0,T}} \norm{h_{n}(t)}{}^2\de\De\ell(t)
 & = \lim_{n \to \infty} \int_{\clint{0,T}} \left(\frac{\V(u_n,\clint{0,T})}{T}\right)^2\de\De\ell(t) \notag \\
 & = \int_{\clint{0,T}} \left(\frac{\V(u,\clint{0,T})}{T}\right)^2\de\De\ell(t) \notag \\
 & = \int_{\clint{0,T}}  \norm{h(t)}{}^2\de\De\ell(t), \notag
\end{align}
hence
\begin{equation}\label{norm to norm}
  \lim_{n \to \infty} \norm{h_n}{L^2(\De\ell;\H)}^2 = \norm{h}{L^2(\De\ell;\H)}^2.
\end{equation}
Now let us observe that from Lemma \ref{L:Q(v)}, formula \eqref{DQ = Q'Dl-2}, and from 
\eqref{def w wn} we have that
\begin{equation}
  \De w = h_n\De\ell, \qquad \De w_n = h_n \De \ell_n.
\end{equation}
Let us also recall that the vector space of (vector) measures $\nu : \borel(\clint{0,T}) \function \H$ can be endowed with the complete norm $\norm{\nu}{} := \vartot{\nu}(\clint{0,T})$, where 
$\vartot{\nu}$ is the total variation measure of $\nu$.
Moreover from the definition of variation, inequality \eqref{V(f)-V(g)}, and the triangle inequality, 
we infer that
\begin{equation}\label{Dl -Dln}
 \norm{\De\ell - \De\ell_n}{} = \vartot{\De\ \!(\ell - \ell_n)}(\clint{0,T}) = \V(\ell - \ell_n, \clint{0,T}) \to 0 \qquad \text{as $n \to \infty$}.
\end{equation}
From \eqref{|g_n|infty bdd} it follows that
\begin{equation}
  \vartot{\De w_n}(B) = \int_B \norm{h_{n}(t)}{} \de \De\ell(t) \le C \vartot{\De \ell_n}(B), \qquad
  \forall B \in \borel(\clint{0,T}),
\end{equation}
therefore, since $\De\ell_n \to \De \ell$ in the space of real measures, we infer that for every $\eps > 0$ there exists $\delta > 0$ such that 
\[
  \vartot{\De \ell}(B) < \delta \ \Longrightarrow\ \sup_{n \in \en} \vartot{\De w_n}(B) < \eps
\]
for every $B \in \borel(\clint{0,T})$. This allows us to apply 
the weak sequential compactness Dunford-Pettis theorem for vector measures (cf. \cite[Theorem 5, p. 105, Theorem 1, p. 101]{DieUhl77} ) and we deduce that, at least for a subsequence, 
$\De w_n$ is weakly convergent to some measure $\nu : \borel(\clint{0,T}) \function \H$. 
Hence thanks to \eqref{wn->w unif} and to \cite[Lemma 7.1]{KopRec16} we infer that 
\begin{equation}
  \De w_n \ \text{is weakly convergent to}\  \De w,
\end{equation}
in particular for every bounded Borel function $\varphi : \clint{0,T} \function \H$, we have that  the functional 
\newline $\nu$ $\longmapsto$ $\int_{\clint{0,T}} \duality{\varphi(t)}{\de \nu(t)}$ is linear and continuous on the space of measures with bounded variation and we have
\[
  \lim_{n \to \infty} \int_{\clint{0,T}} \duality{\varphi(t)}{\de \De w_n(t)} = 
  \int_{\clint{0,T}} \duality{\varphi(t)}{\de \De w(t)},
\]
that is
\begin{equation}\label{gn Dwn -> gDw}
  \lim_{n \to \infty} \int_{\clint{0,T}} \duality{\varphi(t)}{h_{n}(t)} \de \De \ell_n(t) = 
  \int_{\clint{0,T}} \duality{\varphi(t)}{h(t)}\de \De \ell(t).
\end{equation}
On the other hand, by \eqref{|g_n|infty bdd} there exists $\eta \in \L^{2}(\De\ell;\H)$ such that 
$h_{n}$ is weakly convergent to $\eta$ in $\L^2(\De\ell;\H)$, therefore if we set 
$\psi_n(t) := \duality{\varphi(t)}{h_{n}(t)}$ and $\psi(t) := \duality{\varphi(t)}{\eta(t)}$ for $t \in \clint{0,T}$, we have that 
$\psi_n$ is weakly convergent to $\psi$ in $\L^2(\De\ell;\re)$, and
\begin{align}
   & \sp \left| \int_{\clint{0,T}} \psi_n(t) \de \De \ell_n(t) -  \int_{\clint{0,T}} \psi(t) \de \De \ell(t) \right| \notag \\
   &  \le  \int_{\clint{0,T}} |\psi_n(t)| \de \vartot{\De\ \!(\ell_n - \ell)}(t) +
       \left| \int_{\clint{0,T}} (\psi_n(t) - \psi(t)) \de \De \ell(t) \right| \notag \\
   & \le  \norm{\varphi}{\infty}\norm{h_{n}}{\infty} \vartot{\De\ \!(\ell_n - \ell)}(\clint{0,T}) +
       \left| \int_{\clint{0,T}} (\psi_n(t) - \psi(t)) \de \De \ell(t) \right| \to 0 
\end{align}
as $n \to \infty$, because \eqref{|g_n|infty bdd} and \eqref{Dl -Dln} hold, and $\psi_n$ is weakly convergent to $\psi$ in $\L^2(\De\ell;\re)$. Therefore we have found that 
\[
  \lim_{n \to \infty} \int_{\clint{0,T}} \duality{\varphi(t)}{h_{n}(t)} \de \De \ell_n(t)\ = 
  \int_{\clint{0,T}} \duality{\varphi(t)}{\eta(t)}\de \De \ell(t),
\]
hence, by \eqref{gn Dwn -> gDw},
\begin{equation}\label{g dl = z dl weakly}
  \int_{\clint{0,T}} \duality{\varphi(t)}{\de(h\De \ell)(t)} = 
  \int_{\clint{0,T}} \duality{\varphi(t)}{\de (\eta\De \ell)(t)}.
\end{equation}
The arbitrariness of $\varphi$ and \eqref{g dl = z dl weakly} implies that $\eta\De \ell = h\De \ell$ (cf. 
\cite[Proposition 35, p. 326]{Din67}), hence $\eta(t) = h(t)$ for $\De\ell$-a.e. $t \in \clint{0,T}$ and we have found that
\begin{equation}\label{gn to g deb}
  h_{n} \convergedeb h \qquad \text{in $\L^2(\De\ell;\H)$}. 
\end{equation}
Since $\L^2(\De\ell;\H)$ is a Hilbert space, from \eqref{norm to norm} and \eqref{gn to g deb}
we deduce that
\begin{equation}
 h_{n}  \to h \qquad \text{in $\L^2(\De\ell;\H)$} ,
\end{equation}
and, since $\De\ell(\clint{0,T})$ is finite,
\begin{equation}
  h_{n} \to h \qquad \text{in $\L^1(\De\ell;\H)$}.
\end{equation}
Hence, at least for a subsequence which we do not relabel, $h_n(t) \to h(t)$ for $\De\ell$-a.e. $t \in \clint{0,T}$, thus
\begin{align}
 V(w_n - w, [0,T])=  \norm{\De\ \!(w_n - w)}{}
    & =  \norm{\De w_n - \De w}{} =
            \norm{h_{n} \De \ell_n - h \De \ell}{} \notag \\
    & \le \norm{h_{n}\De\ \!(\ell_n - \ell)}{} + \norm{(h_{n} -h) \De\ell}{} \notag \\
    & \le C \norm{\De\ \!(\ell_n - \ell)}{} + 
            \int_{\clint{0,T}}\norm{h_{n}(t) - h(t)}{} \de \De\ell(t) \to 0\notag
\end{align}
as $n \to \infty$ and we have proved that $\norm{w - w_n}{\BV} \to 0$ as $n\to \infty$. We can conclude recalling \eqref{def w wn} and that $\Q(v) = 2\Pl(v) - v$ for every 
$v \in \Czero(\clint{0,T};\H) \cap \BV(\clint{0,T};\H)$.
\end{proof}

We can finally infer the strict continuity of the play operator on 
$\Czero(\clint{0,T};\H) \cap \BV(\clint{0,T};\H)$.

\begin{proof}[Proof of Theorem \ref{T:BVstrict cont}]
The proof of Theorem \ref{T:BVstrict cont} is now a consequence of Theorem \ref{T:BVnorm cont}
and \cite[Theorem 3.4]{Rec11}.
\end{proof}


\end{document}